\DeclareMathAlphabet{\mathcal}{OMS}{cmsy}{m}{n}
\SetMathAlphabet{\mathcal}{bold}{OMS}{cmsy}{b}{n}
\theoremstyle{plain}
\newtheorem{theorem}{Theorem}[section]
\newtheorem{lemma}[theorem]{Lemma}
\newtheorem{conj}[theorem]{Conjecture}
\newtheorem{obs}[theorem]{Observation}
\theoremstyle{definition}
\begin{document}

\title{Voronoi Games on the Discrete Hypercube: Four-Player Equilibria}
\author{Stelios Stylianou\footnote{School of Mathematics, University of Bristol. Supported by an EPSRC Doctoral Training Studentship.}}
\date{October 2025}

\maketitle

\begin{abstract}
    We consider a four-player game on the discrete hypercube $Q_n = \{0,1\}^n$, where each of the four players has chosen a single vertex of the hypercube. Such a position is called a \emph{profile}. Imagine there is a voter at every vertex, and each voter gives their vote to whichever player is closest to them, in terms of Hamming distance. If multiple players are tied for this smallest distance, the vote is divided equally between them. The \emph{score} of a player is the total number of votes they get. (This has a natural interpretation in terms of voting theory: imagine there are $n$ binary issues and that voters are uniformly distributed in their positions on these issues, and view the players as political candidates competing for vote share.) We say that a profile is an \emph{equilibrium} if no player can strictly increase their score by moving to a different vertex, while the other players maintain their original positions. Moreover, a profile is \emph{balanced} if, in each of the $n$ coordinates, two players have chosen 0, and two players have chosen 1. We prove that a four-player profile is an equilibrium if and only if it is balanced, proving a conjecture of Day and Johnson.
\end{abstract}

\section{Introduction}

The following `Voronoi game' on the discrete hypercube was introduced by Feldmann et al.\ \cite{three_player}. Let $n,k$ be positive integers, with $k \geq 2$, and let $Q_n \coloneqq \{0,1\}^n$ denote the (vertices of the) $n$-dimensional hypercube. Suppose we have $k$ players, say $A_1,...,A_k$, and each player $A_i$ has chosen some position $x^{(i)} \in Q_n$. Let $P=(x^{(1)},..., x^{(k)})$. We say that $P$ is a \emph{profile} on $Q_n$. For any $x,y \in Q_n$, let $d(x,y) \coloneqq \sum_{i=1}^n \abs{x_i-y_i}$ denote the Hamming distance between $x$ and $y$. We also write, for any $i \in [k]$, and any $x \in Q_n$, $d_P(x,A_i) \coloneqq d(x,x^{(i)})$, to make it clear that we are considering the player's position in profile $P$.

For any $x \in Q_n$, and any $i \in [k]$, we say that $A_i$ \emph{claims} $x$ (in $P$) if $d_P(x,A_i) \leq d_P(x,A_j)$, for all $j \in [k]$. It is possible that a tie occurs between various players for the smallest distance to a point $x$. We write $t_P(x)$ for the number of players that claim $x$ in $P$.

The \emph{Voronoi cell} of $A_i$ in $P$ is defined as $$V_P(A_i) \coloneqq \{x \in Q_n: A_i \ \text{claims} \ x\},$$ the \emph{ballot} of $x$ in $P$ is defined as 
$$\tau_{i,P}(x) \coloneqq \begin{cases*}
\frac{1}{t_P(x)} & for $x \in V_P(A_i)$, \\
0 & otherwise,
\end{cases*}$$
and the \emph{score} of $A_i$ in $P$ is defined as $$\sigma_P(A_i) \coloneqq 2^{-n} \sum_{x \in Q_n} \tau_{i,P}(x) = 2^{-n} \sum_{x \in V_P(A_i)} \frac{1}{t_P(x)}.$$

In other words, we can imagine that there is a voter at each vertex of $Q_n$, and that voter will give their vote to whichever player is closer to them in terms of Hamming distance. If there is a tie between multiple players, the vote will be equally divided among them.

We say that a profile $P$ is an \emph{equilibrium} if no player $A_i$ can (strictly) increase their score by moving from $x^{(i)}$ to some new position $x^{(i)'}$, while the other players stay at their initial positions. More formally, $P = (x^{(1)}, \dots, x^{(k)})$ is an equilibrium if, for all $i \in \{1, \dots, k\}$, $\sigma_P(A_i) \geq \sigma_{P'}(A_i)$, for any $P' = (x^{(1)}, \dots, x^{(i-1)}, x^{(i)'}, x^{(i+1)}, \dots, x^{(k)})$, where $x^{(i)'} \in Q_n$ is arbitrary. We are interested in determining, for any fixed integer $k \geq 2$, whether there exists an equilibrium with $k$ players on $Q_n$, for arbitrarily large $n$.

This problem was motivated from voting theory: we can imagine that each of the $n$ coordinates is associated with a `yes/no question', concerning some (binary) political issue, where answering `no' to a certain question corresponds to choosing $0$ in the respective coordinate, and answering `yes' corresponds to choosing $1$. Therefore, the position of a player/voter is uniquely determined once they answer all the questions. The distance between two individuals positioned on $Q_n$ is equal to the number of questions in which they disagree. In our model, we assume that the voters are distributed uniformly on the vertices of $Q_n$. This hypercube model is an example of a `spatial voting model', where the positions of candidates and voters on the political spectrum are represented using some geometric structure. See \cite{enelow1984spatial} and \cite{Unified} for discussion of prior work in this area.

Before looking at the hypercube game, we briefly mention some other examples of Voronoi games on graphs. A Voronoi game (named after Georgy Voronoi, who introduced Voronoi diagrams in 1908 \cite{Voronoi1908}) on a graph $G$ is typically played as follows: there is a finite number $k$ of players, and each of them picks a vertex of $G$. There is a voter at every vertex of $G$, and each voter gives their vote to whichever player is closest to them, in terms of graph distance. If there is a tie between multiple players for this smallest distance, the vote is divided equally between them. 

Mavronicolas et al.\ \cite{Voronoi:cycle} have shown that, if $G$ is the cycle graph $C_n$ on $n \geq k$ vertices, then an equilibrium (as defined above for the hypercube game) exists if and only if $k \leq 2n/3$ or $k=n$. Moreover, Feldmann et al.\ \cite{three_player} provide some general results about Voronoi games on transitive graphs, including some results on $Q_n$ that we mention below. Finally, Teramoto et al.\ \cite{Voronoi:tree} consider a two-player variant on the complete $m$-ary tree, which is played in multiple rounds, and each player chooses a single unoccupied vertex at each round.

We now focus on the hypercube problem. We observe that the question is trivial when $k = 2$. Indeed, for all values of $n$, any profile $P = (x^{(1)}, x^{(2)})$ is an equilibrium, as there exists an automorphism of $Q_n$ that swaps the points $x^{(1)}$ and $x^{(2)}$, showing that the two players always get a score equal to $1/2$, and thus neither of them can strictly increase their score by moving to a different point.

However, the problem already becomes interesting when $k = 3$. For a non-negative integer $x$, we write $0_x$ (or $1_x$) to represent a sequence of $x$ consecutive $0$s (or $1$s) in a binary vector. Feldmann et al.\ \cite{three_player} show that, when $n$ is odd, the $3$-player profile $P = (0_n, 0_{n-1}1, 1_n)$ is an equilibrium. Moreover, Day and Johnson \cite{unpublished} have shown that, for all even $n$, there is no $3$-player equilibrium on $Q_n$.

Feldmann et al.\ \cite{three_player} have also shown that, for all $n$, the profile $P = (0_n, 0_n, 1_n, 1_n)$ is a $4$-player equilibrium, and, for any odd $n$, and any $m \in \{1, \dots, n-1\}$, the profile $P = (0_n, 0_m1_{n-m}, 1_m0_{n-m}, 1_n)$ is also an equilibrium.

Note that, for any fixed $k$, it is easy to construct some `trivial' $k$-player equilibria on $Q_n$, for small $n$. For example, a $k$-player equilibrium on $Q_1$ occurs when $\lfloor k/2 \rfloor$ players choose $0$, and $\lceil k/2 \rceil$ players choose $1$. Moreover, when $k=2^n$, a $k$-player equilibrium on $Q_n$ occurs when there is exactly one player at every vertex. Day and Johnson \cite{unpublished} conjecture that, for any $k \geq 5$, there are no $k$-player profiles in equilibrium for large enough $n$, but this remains open for all $k$:

\begin{conj}
\label{conj:k>=5}
    For any fixed $k \geq 5$, there exists some $N$ such that, for any $n \geq N$, there are no $k$-player profiles in equilibrium on $Q_n$.
\end{conj}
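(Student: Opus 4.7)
The plan is to extend the two-part structure of the $k=4$ argument: first establish a necessary balance condition that every $k$-player equilibrium must satisfy, then show that this condition is incompatible with equilibrium once $k \geq 5$ and $n$ is sufficiently large. Call a $k$-player profile \emph{quasi-balanced} if in every coordinate the number of players choosing $0$ differs from the number choosing $1$ by at most one. I would first try to prove that every equilibrium is quasi-balanced, by a player-averaging argument of the following shape: for each player $A_i$ and each coordinate $j$, the inequality $\sigma_P(A_i) \geq \sigma_{P'}(A_i)$, where $P'$ flips the $j$-th bit of $x^{(i)}$, expresses that $A_i$'s net gain from voters on one side of coordinate $j$ is at least their loss on the other side; summing these inequalities over the players currently on the majority side of coordinate $j$ should, after cancellations, force the imbalance of that coordinate to be at most one.

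The harder step is to rule out quasi-balanced equilibria, and I would split this by the parity of $k$. For odd $k \geq 5$ no profile can be perfectly balanced, so in every coordinate one side strictly dominates; my plan is to show that a player on the minority side of a carefully chosen coordinate can strictly gain by flipping it, arguing that once $n$ is large the local statistics of cell boundaries in $Q_n$ stabilise and the ``drift towards the majority'' becomes strictly positive. For even $k \geq 6$ exact balance is possible in each coordinate, so a finer deviation is needed. Here I would analyse \emph{cloning moves}: if player $A_i$ moves to a vertex $y$ at small Hamming distance from some $x^{(j)}$ with $j \neq i$, then $A_i$ should capture a constant fraction of $A_j$'s Voronoi cell. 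Combined with $\sum_i \sigma_P(A_i) = 1$ and the pigeonhole observation that the lowest-scoring player has score $\leq 1/k \leq 1/5$, this should force an improving move for that player provided some opponent's cell is sufficiently large, which in $Q_n$ with large $n$ would follow from dimensional lower bounds on typical Voronoi cell sizes when $k$ is fixed.

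The principal obstacle I foresee is the even-$k$ case: cloning-type arguments typically lose constant factors, and the gain from stealing part of another player's cell need not exceed the loss from abandoning one's own when the profile is highly symmetric. Overcoming this will probably require analysing a family of candidate deviations simultaneously (for instance, averaging the gain over all target vertices at a fixed Hamming distance from a chosen opponent) and showing that \emph{some} deviation is strictly profitable for \emph{some} player. A second structural difficulty is that the ``$n \geq N$'' qualifier in the conjecture is genuinely needed: for small $n$ there are trivial equilibria such as the $2^n$-player profile with one player at each vertex (when $k = 2^n$), so the proof must exploit the high-dimensional geometry of $Q_n$ and cannot proceed by a uniform finite check.
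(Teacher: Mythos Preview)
The statement you are addressing is labelled as a \emph{conjecture} in the paper, and the paper does not prove it: the authors explicitly say it ``remains open for all $k$''. So there is no proof in the paper to compare your proposal against. What you have written is a research outline rather than a proof, and it is important to be clear that none of its three main steps is actually established.

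Two of the steps contain concrete gaps. First, the quasi-balance claim: your idea is to sum, over all majority-side players in a fixed coordinate $j$, the inequality ``flipping bit $j$ does not help'', and hope that cancellations force the imbalance to be at most one. But the change in $\sigma_P(A_i)$ when $A_i$ flips coordinate $j$ depends on the full configuration of all $k$ players across all $n$ coordinates, not just on the column-$j$ counts; there is no evident linearity that would make such a sum collapse to a statement about a single coordinate. You would need to identify an exact identity (not just an inequality) relating column-$j$ flip-gains across players, and none is given.

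Second, in the odd-$k$ step your deviation is in the wrong direction. The $k=4$ analysis in the paper shows that the profitable move for a player in an unbalanced column is to leave the \emph{majority} side and join the minority (the ``balancing move''), not the reverse. A minority-side player who flips towards the majority makes the column more lopsided and moves closer to the crowd, which in Voronoi-type games typically \emph{decreases} their share; there is no reason to expect a positive ``drift towards the majority''. At minimum you would need to reverse the roles, but then for odd $k$ a quasi-balanced column is already as balanced as possible, and a majority-side flip overshoots rather than improves --- so the whole argument for odd $k$ needs rethinking, not just a sign fix. The even-$k$ cloning idea you yourself flag as the main obstacle, and indeed nothing in the proposal explains why the lowest-scoring player's gain from cloning would exceed the loss of their own cell; the pigeonhole bound $\sigma_P(A_i)\le 1/k$ alone does not supply this.
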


Our goal is to give a complete characterization of $4$-player equilibrium profiles, by obtaining a (simple) necessary and sufficient condition for a $4$-player profile to be an equilibrium. When $k = 4$, we say that $P$ is \emph{balanced} if, for any $j$, $x^{(i)}_j=0$ for exactly two values of $i$ (and $x^{(i)}_j=1$ for the other two values of $i$). We prove the following, which was conjectured by Day and Johnson \cite{unpublished}:

\begin{theorem}
\label{thm:hyp_main}
    Let $n$ be a positive integer. A profile of four players on $Q_n$ is an equilibrium if and only if it is balanced.
\end{theorem}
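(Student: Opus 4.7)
The plan is to prove the two directions separately. Throughout, I apply the isometry $v \mapsto v \oplus x^{(1)}$ to place $x^{(1)} = 0_n$, and in the balanced case I further permute coordinates to put the profile into the canonical three-block form
\[
x^{(1)} = 0_n, \qquad x^{(2)} = 0^{n_1}1^{n_2+n_3}, \qquad x^{(3)} = 1^{n_1}0^{n_2}1^{n_3}, \qquad x^{(4)} = 1^{n_1+n_2}0^{n_3},
\]
with $n_1+n_2+n_3=n$. In this form each distance $d(v,x^{(i)})$ is a simple linear function of the three block weights of $v$.

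For the sufficiency (\emph{balanced implies equilibrium}), the first step is to show that every player scores exactly $1/4$. The balanced condition gives $x^{(1)} \oplus x^{(2)} \oplus x^{(3)} \oplus x^{(4)} = 0_n$, so for each partition $\{i,j\} \sqcup \{k,l\}$ of the four players the translation $\phi_{ij}(v) := v \oplus x^{(i)} \oplus x^{(j)}$ is a Hamming isometry of $Q_n$ that fixes $P$ setwise while swapping $A_i \leftrightarrow A_j$ and $A_k \leftrightarrow A_l$. The three resulting involutions generate a group acting transitively on the players, so all four scores coincide; as they sum to $1$, each equals $1/4$.

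The harder second step is to show $\sigma_{P'}(A_1) \le 1/4$ for every deviation $y$ of $A_1$. I plan to prove the stronger statement that after the move $A_1$ becomes a weakest player: $\sigma_{P'}(A_1) \le \sigma_{P'}(A_j)$ for each $j\in\{2,3,4\}$. Combined with $\sum_j \sigma_{P'}(A_j) = 1$, this immediately yields $\sigma_{P'}(A_1) \le 1/4$. To obtain each of the three inequalities I would use the isometry $\phi_{1j}$: although $\phi_{1j}$ no longer fixes $P'$ (only the original $P$), it still provides a bijection of $Q_n$ that, when applied to $P'$, yields a profile related to $P'$ by moving $A_1$ to $\phi_{1j}(y)$ and $A_j$ to $x^{(1)}=0_n$. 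Comparing these two profiles voter by voter using the canonical block-weight form lets us read off, from the sign of $d(v,y)-d(v,x^{(j)})$, how the ballots of $A_1$ and $A_j$ compare at $v$; summing over $v$ should give the desired inequality.

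For the necessity (\emph{not balanced implies not equilibrium}), suppose some coordinate $j$ has $|\{i:x^{(i)}_j=1\}|\ne 2$; after globally flipping that coordinate if necessary, I may assume at least three players are at $1$ in coordinate $j$. I will show that some such player $A_i$ strictly improves by flipping $x^{(i)}_j$ to $0$. The score change decomposes into a gain on $\{v:v_j=0\}$ and a loss on $\{v:v_j=1\}$. On the gain side, $A_i$'s three competitors are skewed toward $1$ in coordinate $j$, so their Voronoi presence on the $v_j=0$ half is disproportionately weak and $A_i$ captures many new votes; on the loss side, $A_i$ was already sharing with the other majority players, so the loss is small. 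Reducing each half to a $3$-player subgame on $Q_{n-1}$ and directly tallying shared versus unique ballots should give a strictly positive net change, with the strict inequality traceable to the imbalance at coordinate $j$.

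The main obstacle is the second step of sufficiency: proving $\sigma_{P'}(A_1) \le \sigma_{P'}(A_j)$ cleanly requires exploiting the symmetries $\phi_{1j}$ even though the natural voter-pairing no longer preserves $P'$. Expressing the ballot difference at each voter as a function of signed distance comparisons and telescoping over the block structure should close the argument, but an honest verification will likely require substantial case analysis on the position of $y$ relative to the block partition. The necessity direction is more direct, though corner cases where the single-coordinate flip gives net zero may force a two-coordinate move; in that case I expect that combining two independent single-flip arguments on distinct imbalanced coordinates restores strict positivity.
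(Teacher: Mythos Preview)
Your first step for sufficiency (that every player scores $1/4$ in a balanced profile) is correct and is exactly the symmetry argument the paper uses. However, your second step is built on a claim that is \emph{false}: it is not true that the deviating player becomes a weakest player. Take $n=3$ with the balanced profile $x^{(1)}=000$, $x^{(2)}=011$, $x^{(3)}=101$, $x^{(4)}=110$, and let $A_1$ deviate to $y=001$. A direct tally gives
\[
\sigma_{P'}(A_1)=\tfrac{2}{8}=\tfrac{1}{4},\qquad \sigma_{P'}(A_2)=\sigma_{P'}(A_3)=\tfrac{11}{48}<\tfrac{1}{4},\qquad \sigma_{P'}(A_4)=\tfrac{14}{48},
\]
so $A_1$ is strictly \emph{stronger} than $A_2$ and $A_3$ after the move. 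Thus the inequality $\sigma_{P'}(A_1)\le \sigma_{P'}(A_j)$ fails, and the isometry $\phi_{1j}$ cannot deliver it: once $A_1$ has moved, $\phi_{1j}$ no longer maps $P'$ to itself, so comparing the two resulting profiles voter by voter does not compare $A_1$'s ballot with $A_j$'s ballot in the same profile. The paper instead bounds $\sigma_{P'}(A_4)\le 1/4$ directly: it writes the condition $x^*\in V_{P'}(A_4)$ as three linear inequalities in centred block-weight variables $\delta_i$, observes that the number of points with given $(\delta_1,\dots,\delta_6)$ is invariant under $\delta_i\mapsto -\delta_i$, and then checks (by a lengthy but finite case analysis) that on each orbit $\{\delta:|\delta_i|=\alpha_i\}$ the average reward to $A_4$ is at most $1/4$. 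Nothing in your outline supplies a substitute for this computation.

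Your necessity plan is also different in kind from the paper's and, as stated, incomplete. You propose a single-coordinate flip and an informal ``gain versus loss'' accounting on the two halves of the cube; but you give no mechanism for showing the net change is strictly positive, and you already anticipate needing multi-coordinate moves in corner cases. The paper avoids this by having a player perform a \emph{balancing move} (choose the minority value of the other three in every coordinate simultaneously), proving via the same orbit-averaging trick that this yields score at least $1/4$, and then exhibiting, for each parity pattern of the block sizes, an explicit orbit on which the inequality is strict whenever the profile is unbalanced. Your single-flip reduction to a three-player subgame on $Q_{n-1}$ does not obviously produce such a strict inequality, and three-player score computations on $Q_{n-1}$ are themselves delicate (recall there are no three-player equilibria for even $n$).
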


A sketch of the proof of Theorem \ref{thm:hyp_main} is about to follow. We will first show that, in any balanced profile, all players get a score equal to $1/4$. Then we will prove that, starting from any balanced profile $P$, if a player, say $A_4$, moves to a different point, giving profile $P'$, their score will be equal to at most $1/4$. For the other direction, we will start from a non-balanced profile $P$, and show that any player, say $A_4$ again, performing a `balancing move', which means choosing the least popular value among the other three players in each coordinate, gets a score of at least $1/4$ in the new profile $P'$. If any player's score in $P$ is less than $1/4$ we are done. Otherwise, we show that at least one of the four players gets a score strictly larger than $1/4$ after performing their balancing move. 

For both directions, we will write down the players' positions in $P'$ in some general form, and then specify a set of three inequalities that must be satisfied in order for an arbitrary point $x^*$ to belong to $A_4$'s Voronoi cell. Each inequality will correspond to $A_4$ being closer to $x^*$ than $A_i$ is, for $i=1,2,3$. We will now briefly explain how we prove that non-balanced implies non-equilibrium. 

In any arbitrary profile, we can assume that at most one of $A_1,A_2$, and $A_3$ has chosen $1$ in any given coordinate. Therefore, $A_4$ chooses $1$ in every coordinate when performing his balancing move. There are four types of coordinates: in one of them, all three of $A_1,A_2$, and $A_3$ have chosen $0$. In the other three, exactly one of them has chosen $1$, and the different types are determined according to who that player is. We split the $n$ coordinates into four intervals, corresponding to the four types. In order to decide whether $x^*$ belongs to $A_4$'s Voronoi cell, we only need to know how many $1$s it contains in each interval, and the latter can be described using four variables. We write $\beta_1, \beta_2, \beta_3$, and $\beta_4$ for the number of $1$s that $x^*$ has in each interval. Therefore, we end up with three inequalities involving the $\beta_i$. If `$\geq$' holds in all three, then $x^*$ is claimed by $A_4$. The number of equalities decides what fraction of $x^*$'s vote goes to $A_4$. 

We then prove that, as the four variables take all possible values, the average score of $A_4$ (taking into account `how often' each quadruple of $\beta_i$'s occurs) is at least $1/4$. The most important observation, which makes the calculations much easier, is that, if the four intervals have lengths $y_1, \dots, y_4$, then the number of points that have $\alpha_i$ $1$s in the $i$th interval is constant when $\alpha_i \in \{\beta_i, y_i-\beta_i \}$ for all $i$. It is enough to show that, for any fixed $\beta_i$, $A_4$'s average score is at least $1/4$ as $(\alpha_1, \dots, \alpha_4)$ takes 16 different values, which occur by setting $\alpha_i$ equal to either $\beta_i$ or $y_i-\beta_i$. 

When proving that balanced implies equilibrium, we end up with a set of three equations in six variables rather than four, and we wish to show that $A_4$'s average score is at most $1/4$, using a very similar method to the one described above.

The remainder of this paper is structured as follows. In Section \ref{sec:bal_equil}, we prove that any balanced profile is an equilibrium, by first showing that all players get an equal score in a balanced profile. In Section \ref{sec:equil_bal}, we prove that any profile in equilibrium is balanced, completing the proof of Theorem \ref{thm:hyp_main}. In Section \ref{sec:conclusion} we conclude by discussing open problems.

\section{Proof that a balanced profile is an equilibrium.}
\label{sec:bal_equil}
In this section, we prove the backward direction of Theorem \ref{thm:hyp_main}, which we restate as the following.

\begin{theorem}
\label{thm1:main}
    If a profile of four players on $Q_n$ is balanced, then it is an equilibrium.
\end{theorem}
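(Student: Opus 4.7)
I would prove the theorem in two stages: (a) in any balanced profile $P$, every player scores exactly $1/4$; and (b) for any balanced $P$ and any $y\neq x^{(4)}$, the profile $P'$ obtained by moving $A_4$ to $y$ satisfies $\sigma_{P'}(A_4)\leq 1/4$. Together these give $\sigma_{P'}(A_4)\leq \sigma_P(A_4)$ for any deviation of $A_4$, and by the symmetric roles of the four players in $P$ the same argument rules out profitable deviations by $A_1, A_2, A_3$.

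For (a), the key is that in a balanced profile each coordinate of $[n]$ induces one of the three perfect matchings of $\{1,2,3,4\}$ into pairs, partitioning $[n]$ into three ``types''. Let $S\subseteq[n]$ be the union of the coordinates of types $\{1,3\}|\{2,4\}$ and $\{1,4\}|\{2,3\}$; these are exactly the coordinates on which $x^{(1)}\neq x^{(2)}$, and by balance also exactly those on which $x^{(3)}\neq x^{(4)}$. The bit-flip involution $\phi_S$ of $Q_n$ therefore swaps $x^{(1)}\leftrightarrow x^{(2)}$ and $x^{(3)}\leftrightarrow x^{(4)}$ simultaneously, so $\sigma_P(A_1)=\sigma_P(A_2)$ and $\sigma_P(A_3)=\sigma_P(A_4)$. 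An analogous involution, flipping the coordinates of types $\{1,2\}|\{3,4\}$ and $\{1,4\}|\{2,3\}$, permutes the players as $(1\,3)(2\,4)$, giving $\sigma_P(A_1)=\sigma_P(A_3)$. Combined with $\sum_i\sigma_P(A_i)=1$, each score is exactly $1/4$.

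For (b), by applying an automorphism of $Q_n$ I may assume $x^{(1)}=0_n$, so that $x^{(2)}, x^{(3)}, x^{(4)}$ are determined by the type counts $(n_1,n_2,n_3)$. Given $y$, I refine each type-block by the value of $y_j$, obtaining six intervals $I_{ij}$ ($i\in\{1,2,3\}$, $j\in\{0,1\}$) of lengths $a_{ij}$. For $x^*\in Q_n$, let $\beta_{ij}$ denote the number of $1$s of $x^*$ in $I_{ij}$ and set $D_{ij}:=2\beta_{ij}-a_{ij}$. Computing the four Hamming distances as affine functions of the $\beta_{ij}$ yields three explicit linear inequalities in the $D_{ij}$, one per rival player, whose conjunction characterises $x^*\in V_{P'}(A_4)$; the tie-count $t_{P'}(x^*)$ equals $1$ plus the number of these inequalities that are equalities.

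The crucial reduction is that, for any fixed tuple of magnitudes $(|D_{ij}|)$, each of the $2^6=64$ sign patterns $(\operatorname{sgn} D_{ij})$ is realised by exactly $\prod_{i,j}\binom{a_{ij}}{\beta_{ij}}$ voters, independently of the signs (since $\binom{a_{ij}}{\beta_{ij}}=\binom{a_{ij}}{a_{ij}-\beta_{ij}}$). It therefore suffices to show that for every magnitude tuple the sum $\sum_{\text{signs}}\tau_{4,P'}(x^*)$ is at most $16$, i.e.\ that the average fraction awarded to $A_4$ across the $64$ sign patterns is at most $1/4$. I expect this per-magnitude inequality to be the main obstacle: the three inequalities characterising $V_{P'}(A_4)$ are not symmetric in the four players, so no one-line symmetry argument yields the bound. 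The plan is a finite case analysis over the $64$ patterns, organised by the signs of the simple linear forms appearing in the inequalities, with careful bookkeeping of the equality cases (arising when some magnitudes vanish) that affect $t_{P'}(x^*)$.
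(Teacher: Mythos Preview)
Your proposal is correct and follows essentially the same route as the paper: the paper proves Lemma~\ref{lem:balanced_equal} via the same involutions you describe, then normalises to $x^{(4)}=1_n$, splits $[n]$ into six blocks (their $z_1,\dots,z_6$ are your $a_{ij}$), sets $\delta_i=2\beta_i-z_i$, derives the same three linear inequalities, and reduces to showing $\sum_{\delta\in S_\alpha} r(\delta)\le |S_\alpha|/4$ for each magnitude tuple $\alpha$ by a finite case analysis over sign patterns. The only point worth flagging is that when some $\alpha_i=0$ the number of sign patterns drops below $64$ and the target bound changes accordingly; the paper handles these degenerate cases separately after the generic $64$-pattern lemma, so be sure your ``careful bookkeeping'' covers this and not just the effect of vanishing magnitudes on the tie-count.
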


We first make some preliminary remarks and outline our strategy. It is easy to show that, in any balanced profile, each player gets a score equal to $1/4$. This simple observation will form the basis of the proof of Theorem \ref{thm:hyp_main}. In order to show that any balanced profile is an equilibrium, we will show that, if the position is balanced, then any move performed by a single player cannot result in their score increasing above $1/4$. In order to show that a non-balanced profile cannot be an equilibrium, we do not explicitly use Lemma \ref{lem:balanced_equal}. However, the motivation behind proving this also comes from Lemma \ref{lem:balanced_equal}, as we will show that there exists at least one player who can get a score higher than $1/4$ by moving to the position `closest to balanced', which means choosing, in each coordinate, the least popular value ($0$ or $1$) between the other three players.

\begin{lemma}
\label{lem:balanced_equal}
    If a profile $P$ of four players on $Q_n$ is balanced, then, for any $i$, $$\sigma_P(A_i) = 1/4.$$
\end{lemma}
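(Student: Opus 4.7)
The plan is to exploit the symmetries of a balanced profile. First, summing the scores over all players gives
\[
\sum_{i=1}^{4} \sigma_P(A_i) \;=\; 2^{-n} \sum_{x \in Q_n} \sum_{i=1}^{4} \tau_{i,P}(x) \;=\; 2^{-n} \sum_{x \in Q_n} 1 \;=\; 1,
\]
since every vertex contributes a total vote of $1$ (split among the players who claim it). So it suffices to prove that the four scores are all equal.

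To do this, I would, for each pair $\{i,j\} \subset \{1,2,3,4\}$, exhibit an isometry of $Q_n$ that takes the profile $P$ to itself up to a relabelling of players which swaps $A_i$ with $A_j$. Let $D_{ij} \coloneqq \{k \in [n] : x^{(i)}_k \neq x^{(j)}_k\}$ and let $\phi_{ij} : Q_n \to Q_n$ be the map that flips exactly the coordinates in $D_{ij}$. Then $\phi_{ij}$ preserves Hamming distance, and by construction $\phi_{ij}(x^{(i)}) = x^{(j)}$ and $\phi_{ij}(x^{(j)}) = x^{(i)}$. The key step is to show, using balancedness, that $\phi_{ij}$ simultaneously swaps the positions of the remaining two players. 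Writing $\{\ell,m\} = \{1,2,3,4\} \setminus \{i,j\}$: in each coordinate $k \in D_{ij}$ the values $x^{(i)}_k$ and $x^{(j)}_k$ account for one $0$ and one $1$, so balancedness forces $x^{(\ell)}_k \neq x^{(m)}_k$ (one of each also), and the flip swaps their values; in each coordinate $k \notin D_{ij}$ we have $x^{(i)}_k = x^{(j)}_k$, and balancedness forces $x^{(\ell)}_k = x^{(m)}_k = 1 - x^{(i)}_k$, so the coordinate is left fixed. Hence $\phi_{ij}$ indeed maps $x^{(\ell)} \leftrightarrow x^{(m)}$.

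Since $\phi_{ij}$ is an isometry, $d_P(\phi_{ij}(x), A_i) = d_P(x, A_j)$ for every $x$, and analogous equalities hold for the other three players under the swap $(i,j)(\ell,m)$. Therefore $\phi_{ij}$ preserves the function $t_P$ and maps $V_P(A_i)$ bijectively onto $V_P(A_j)$, which yields $\sigma_P(A_i) = \sigma_P(A_j)$. Applying this to the pairs $\{1,2\}$ and $\{1,3\}$ gives $\sigma_P(A_1) = \sigma_P(A_2) = \sigma_P(A_3) = \sigma_P(A_4)$, and combined with the first display this forces each score to equal $1/4$.

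There is no serious obstacle; the entire argument hinges on the one-line verification that in a balanced profile the coordinate-flip $\phi_{ij}$ swaps the two non-involved players, which follows immediately from splitting the coordinates into those in $D_{ij}$ and those outside it.
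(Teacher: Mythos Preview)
Your proof is correct and follows essentially the same approach as the paper: the map $\phi_{ij}$ that flips the coordinates in $D_{ij}$ is precisely the automorphism $x \mapsto x + x^{(i)} + x^{(j)}$ (the paper first normalises so that $x^{(1)}=0_n$ and then writes it as $x \mapsto x + x^{(2)}$ or $x \mapsto x + x^{(3)}$), and the verification that this swaps the other two players is the same balancedness observation. Your version is slightly more explicit in spelling out the sum-of-scores identity and the coordinate-by-coordinate check, but there is no substantive difference.
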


\begin{proof}
    An arbitrary balanced profile can be written as $x^{(1)} = 0_n$, $x^{(2)} = 0_m1_{n-m}$, $x^{(3)} = 1_mu$, and $x^{(4)} = 1_m (1_{n-m}-u)$, where $u$ is any binary vector of length $n-m$. Let $h:Q_n \to Q_n$ be an automorphism given by $h(x) \coloneqq x+x^{(2)}$. Applying $h$ results in $A_1$ and $A_2$ swapping positions, as well as $A_3$ and $A_4$ also swapping positions. This implies that $A_1$ and $A_2$ get the same score, and so do $A_3$ and $A_4$. Repeating the argument (with the automorphism given by adding $x^{(3)}$ to everything) shows that $A_1$ and $A_3$ get the same score, and therefore all four players get the same score.
\end{proof}

In order to prove Theorem \ref{thm1:main}, it suffices to show that, starting from a balanced profile, any player changing their position will get a score of at most $1/4$. Let $y_1, y_2, y_3$ be non-negative integers such that $y_1 + y_2 + y_3 = n$. Without loss of generality, we can begin with a balanced profile $P$ given by $x^{(1)} = 0_{y_1}0_{y_2}1_{y_3}, x^{(2)} = 0_{y_1}1_{y_2}0_{y_3}, x^{(3)} = 1_{y_1}0_{y_2}0_{y_3}$, and $x^{(4)} = 1_n$. Our goal is to show that, if $A_4$ moves to any other position, he will get a score of at most $1/4$. Let $z_1, \dots, z_6$ be non-negative integers, such that $y_1 = z_1 + z_2, y_2 = z_3 + z_4$, and $y_3 = z_5 + z_6$. Let $x^{(4)'} = 1_{z_1}0_{z_2}1_{z_3}0_{z_4}1_{z_5}0_{z_6}$, and let $P' = (x^{(1)}, x^{(2)}, x^{(3)}, x^{(4)'})$. We want to prove that $\sigma_{P'}(A_4) \leq 1/4$.

Let $x^* \in Q_n$ be an arbitrary point, and let $\beta_1$ denote the number of 1s in the first $z_1$ coordinates of $x^*$, $\beta_2$ the number of 0s in the next $z_2$ coordinates, $\beta_3$ the number of 1s in the next $z_3$ coordinates, $\beta_4$ the number of 0s in the next $z_4$ coordinates, $\beta_5$ the number of 1s in the next $z_5$ coordinates, and $\beta_6$ the number of 0s in the last $z_6$ coordinates of $x^*$. In other words, $\beta_i$ shows the number of coordinates in which $x^*$ agrees with $x^{(4)'}$ in the interval corresponding to $z_i$.

We will write down a set of equations that must be satisfied in order for $x^*$ to belong to $V_{P'}(A_4)$. In order for this to happen, no other player can be closer to $x^*$ than $A_4$ is, so we must have $d_{P'}(x^*,A_4) \leq d_{P'}(x^*,A_i)$, for $i=1, 2, 3$. For a certain value of $i$, this is satisfied if and only if $x^*$ agrees with $x^{(4)'}$ in at least half of the coordinates in which $x^{(4)'}$ and $x^{(i)}$ disagree. The value taken by $x^*$ in the remaining coordinates is irrelevant. This gives the following set of equations:

\begin{equation}
\label{eq1:general}
    \begin{aligned}
        \beta_1+\beta_3+\beta_6 &\geq \frac{z_1+z_3+z_6}{2}, \\
        \beta_1+\beta_4+\beta_5 &\geq \frac{z_1+z_4+z_5}{2}, \\
        \beta_2+\beta_3+\beta_5 &\geq \frac{z_2+z_3+z_5}{2}.
    \end{aligned}
\end{equation}
For simplicity, we set $\delta_i = 2\beta_i - z_i$, and (\ref{eq1:general}) becomes:

\begin{equation}
\label{eq1:general_simple}
    \begin{aligned}
    \delta_1+\delta_3+\delta_6 &\geq 0, \\  
    \delta_1+\delta_4+\delta_5 &\geq 0, \\
    \delta_2+\delta_3+\delta_5 &\geq 0.
    \end{aligned}
\end{equation}

Let $\delta = (\delta_1, \dots, \delta_6)$. We say that a point $x^*$, as defined above, is a \emph{point of type $\delta$}. The number of points of type $\delta$ is equal to
\begin{equation*}
    \prod_{i=1}^6 \binom{z_i}{\beta_i} = \prod_{i=1}^6 \binom{z_i}{\frac{\delta_i + z_i}{2}} \coloneqq f(\delta).
\end{equation*}
We observe that $f$ is symmetric in each $\delta_i$. Therefore, if $\delta$ and $\delta'$ are chosen such that $\abs{\delta_i} = \abs{\delta_i'}$, for every $i$, then $f(\delta) = f(\delta')$.

For any $\delta$, we define the \emph{reward} $r(\delta)$ of $A_4$ from points of type $\delta$ as follows: if $\delta$ does not satisfy (\ref{eq1:general_simple}), then $r(\delta) = 0$. If $\delta$ satisfies (\ref{eq1:general_simple}), then 
$$r(\delta) = 
\begin{cases}
1, & \text{if strict inequality holds in all three equations in (\ref{eq1:general_simple})}, \\
1/2, & \text{if equality holds in exactly one equation in (\ref{eq1:general_simple})}, \\
1/3, & \text{if equality holds in exactly two equations in (\ref{eq1:general_simple})},\\
1/4, & \text{if equality holds in all three equations in (\ref{eq1:general_simple})}.
\end{cases}$$

For $i=1, \dots, 6$, let $\alpha_i$ be a non-negative integer, and let $\alpha = (\alpha_1, \dots, \alpha_6)$. Let $$S_\alpha \coloneqq \{\delta \in \mathbb{Z}^6 : \abs{\delta_i} = \alpha_i \ \text{for} \ i=1,\dots, 6\}.$$
It suffices to show that, for every $\alpha$, the average reward of $A_4$ from points of type $\delta$, where $\delta$ takes values in $S_\alpha$, is at most $1/4$.
Since $f$ is constant on $S_\alpha$, it is sufficient to prove that, for any $\alpha$,
\begin{equation}
    \label{eq1:alpha_good}
        \sum_{\delta \in S_\alpha} r(\delta) \leq \frac{\abs{S_\alpha}}{4}.
\end{equation}
We will say that $\alpha$ is \emph{good} if it satisfies equation (\ref{eq1:alpha_good}). We will first show that $\alpha$ is good if $\alpha_i > 0$ for all $i$:

\begin{lemma}
\label{lem1:main}
    If $\alpha_i > 0$, for $i=1, \dots, 6$, then $\alpha$ is good.
\end{lemma}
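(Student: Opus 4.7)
The plan is to exploit the structural fact that in (\ref{eq1:general_simple}) each ``slack'' variable $\delta_2,\delta_4,\delta_6$ appears in exactly one inequality, while each ``vertex'' variable $\delta_1,\delta_3,\delta_5$ appears in two. Summing over the slack variables inside will reduce the 64-term sum over $S_\alpha$ to an 8-term sum indexed by sign choices of $(\delta_1,\delta_3,\delta_5)$.

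To treat the tie weights $1/(1+e)$ uniformly, I would use the identity $1/(1+e)=\int_0^1 t^e\,dt$ with $\phi_t(v):=\mathbf{1}\{v>0\}+t\,\mathbf{1}\{v=0\}$, so that $r(\delta)=\int_0^1 \prod_{i=1}^3 \phi_t(L_i(\delta))\,dt$; indeed the integrand equals $t^e$ when $\delta$ satisfies (\ref{eq1:general_simple}) with $e$ equalities, and is zero otherwise. Since $\delta_6,\delta_4,\delta_2$ appear only in $L_1,L_2,L_3$ respectively, summing over them inside the integral factors cleanly, yielding
\[
\sum_{\delta \in S_\alpha} r(\delta) \;=\; \int_0^1 \sum_{(s_1,s_3,s_5)\in\{\pm 1\}^3} g_{\alpha_6}(s_1\alpha_1+s_3\alpha_3,t)\,g_{\alpha_4}(s_1\alpha_1+s_5\alpha_5,t)\,g_{\alpha_2}(s_3\alpha_3+s_5\alpha_5,t)\,dt,
\]
where $g_\gamma(u,t):=\phi_t(u+\gamma)+\phi_t(u-\gamma)$ takes one of the explicit values $0,t,1,1+t,2$ according as $u<-\gamma$, $u=-\gamma$, $|u|<\gamma$, $u=\gamma$, or $u>\gamma$.

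I would then conclude by a case analysis on the eight vertex sign patterns, organised using the $S_3$-symmetry that simultaneously permutes the three vertices $\{1,3,5\}$ and their opposite edges $\{2,4,6\}$; without loss of generality $\alpha_1\ge\alpha_3\ge\alpha_5$. The split depends on how each vertex sum $s_i\alpha_i+s_j\alpha_j$ compares with the corresponding opposite slack magnitude. In the extremal regime where $\alpha_1$ dominates, so that the signs of both $s_1\alpha_1\pm s_3\alpha_3$ and $s_1\alpha_1\pm s_5\alpha_5$ are controlled by $s_1$, most of the vertex patterns contribute zero and the surviving ones sum, after integrating, to exactly $16$; in all other regimes the bound is strict.

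The main obstacle is that the bound is tight, which rules out a coarse pointwise estimate on the integrand: at $t=1$, the integrand equals the number of $\delta\in S_\alpha$ satisfying (\ref{eq1:general_simple}) weakly, and this count can exceed $16$. The $t$-dependent factors in $g_\gamma$, which arise precisely when some sum $s_i\alpha_i+s_j\alpha_j$ equals $\pm$(the corresponding opposite slack magnitude), provide the tie discount that brings the integral back down to at most $16$ in the saturating configurations. Tracking these polynomial contributions through the sub-cases is the delicate part of the argument.
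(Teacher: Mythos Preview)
Your integral representation $r(\delta)=\int_0^1\prod_i\phi_t(L_i(\delta))\,dt$ and the factorisation over the slack variables $\delta_2,\delta_4,\delta_6$ are both correct, and this is a genuinely different organisation from the paper's. The paper's proof is a direct enumeration: after invoking Observation~\ref{obs1:good} to dispose of any $\alpha$ in which one entry strictly dominates its two companions in an inequality, it walks through the $64$ sign patterns of $\delta$, organised by how many of the three inequalities in (\ref{eq1:general_simple}) can be tight (four main cases, with up to $27$ sub-cases reduced by the $S_3$-symmetry), and tabulates $\sum_{\delta\in S_\alpha}r(\delta)$ explicitly each time. Your reduction to eight vertex sign-patterns with polynomial-in-$t$ weights is more structural and makes the tightness visible: when both $\alpha_1>\alpha_3+\alpha_6$ and $\alpha_1>\alpha_4+\alpha_5$, only $s_1=+1$ survives with $g_{\alpha_6}=g_{\alpha_4}=2$, and the pair identity $\int_0^1\bigl[g_\gamma(u,t)+g_\gamma(-u,t)\bigr]\,dt=2$ (valid for every $u$) collapses the remaining sum over $(s_3,s_5)$ to $4$, giving exactly $4\cdot 4=16$.

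That said, two points deserve tightening. First, your description of the extremal regime is loose: having the signs of $s_1\alpha_1\pm s_j\alpha_j$ ``controlled by $s_1$'' only needs $\alpha_1>\alpha_3,\alpha_5$, whereas forcing $g_{\alpha_6},g_{\alpha_4}\in\{0,2\}$ requires the stronger domination $\alpha_1>\alpha_3+\alpha_6$ and $\alpha_1>\alpha_4+\alpha_5$ used above. Second, outside this regime the three $g$-factors remain coupled through the shared vertex variables, so you still face a case split on the six comparisons of $\alpha_i\pm\alpha_j$ against the opposite $\alpha_k$; you correctly flag this as ``the delicate part'' but do not carry it out. The residual bookkeeping is feasible and is plausibly shorter than the paper's tables, but as it stands this is a sound plan rather than a completed proof.
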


We begin by observing that, if some $\alpha_j$ is `too large' (compared to the other $\alpha_i$ that appear in the same equation), then it is easy to show that $\alpha$ is good. This will allow us to reduce the number of different cases that we need to check.

\begin{obs}
\label{obs1:good}
    Let $\alpha_1, \dots, \alpha_6$ be non-negative integers. If there exist distinct $i, j, k \in \{1, \dots, 6\}$, such that $\delta_i, \delta_j$, and $\delta_k$ appear in the same inequality in (\ref{eq1:general_simple}), and also $\alpha_i > \alpha_j + \alpha_k$, then $\alpha$ is good.
\end{obs}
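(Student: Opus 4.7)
My plan is to prove the observation by a pairing argument on $S_\alpha$, based on two ingredients: a partition according to the sign of $\delta_i$, and an explicit involution on the ``good'' half that forces rewards to sum to at most $1$ pairwise.

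By the $S_3$-symmetry of the three inequalities in (\ref{eq1:general_simple}) --- realised by permuting the non-moving players $A_1, A_2, A_3$ and their associated pairs of intervals $\{z_1, z_2\}, \{z_3, z_4\}, \{z_5, z_6\}$ --- I may assume the hypothesis applies to the first inequality, so $\{i, j, k\} = \{1, 3, 6\}$. Writing $E_1(\delta) = \delta_1 + \delta_3 + \delta_6$, $E_2(\delta) = \delta_1 + \delta_4 + \delta_5$, and $E_3(\delta) = \delta_2 + \delta_3 + \delta_5$, the opening observation is that the hypothesis $\alpha_i > \alpha_j + \alpha_k$ forces $E_1(\delta)$ to have the same sign as $\delta_i$ throughout $S_\alpha$ and never to vanish: if $\delta_i = -\alpha_i$ then $E_1(\delta) \leq -\alpha_i + \alpha_j + \alpha_k < 0$ (so $r(\delta) = 0$), while if $\delta_i = +\alpha_i$ then $E_1(\delta) \geq \alpha_i - \alpha_j - \alpha_k > 0$. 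Setting $S_\alpha^+ = \{\delta \in S_\alpha : \delta_i = +\alpha_i\}$, which has size $|S_\alpha|/2$, it suffices to prove $\sum_{\delta \in S_\alpha^+} r(\delta) \leq |S_\alpha|/4$.

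For this I use the involution $\iota \colon S_\alpha^+ \to S_\alpha^+$ that fixes $\delta_i$ and negates every other coordinate. A direct calculation gives $E_e(\iota(\delta)) = 2\alpha_i - E_e(\delta)$ whenever $i$ appears in $E_e$, and $E_e(\iota(\delta)) = -E_e(\delta)$ otherwise. In particular $E_1(\iota(\delta)) = 2\alpha_i - E_1(\delta) > 0$ (since $E_1(\delta) \leq \alpha_i + \alpha_j + \alpha_k < 2\alpha_i$), so $\iota$ really maps $S_\alpha^+$ to itself; and because every index in $\{1, 3, 6\}$ is absent from at least one of the two equations $E_2, E_3$, at least one of these equations is genuinely sign-flipped by $\iota$. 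The heart of the argument --- and the main step to verify carefully --- is then the pairwise bound $r(\delta) + r(\iota(\delta)) \leq 1$. If $r(\delta) = 1$ then all three $E_e(\delta)$ are strictly positive, so the $E_e$ that is sign-flipped by $\iota$ becomes strictly negative at $\iota(\delta)$, forcing $r(\iota(\delta)) = 0$ and pair sum exactly $1$. Otherwise $r(\delta) \leq 1/2$, and applying the same reasoning to $\iota(\delta)$ gives $r(\iota(\delta)) \leq 1/2$ as well, so the pair sum is again at most $1$. (For a fixed point of $\iota$, all coordinates $\delta_\ell$ with $\ell \neq i$ vanish, so the flipped $E_e$ equals zero and hence $r(\delta) \leq 1/2$ on its own.)

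To finish I sum over orbits of $\iota$ on $S_\alpha^+$: if $f$ denotes the number of fixed points and $p$ the number of $2$-element orbits, then $f + 2p = |S_\alpha|/2$ and $\sum_{\delta \in S_\alpha^+} r(\delta) \leq \tfrac{f}{2} + p = |S_\alpha|/4$, as required.
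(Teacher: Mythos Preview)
Your proof is correct and follows essentially the same approach as the paper: both arguments first restrict to $S_\alpha^+ = \{\delta_i = +\alpha_i\}$ (where the first inequality is automatically strict and $r$ vanishes on the complement), then exploit a sign-flipping symmetry on an inequality $E_e$ not containing $\delta_i$ to bound the average reward on $S_\alpha^+$ by $1/2$. The only cosmetic difference is the choice of symmetry --- you negate every coordinate except $\delta_i$, whereas the paper negates only the three coordinates appearing in $E_e$ and phrases the conclusion probabilistically (``$E_e$ is positive or negative with equal probability'') rather than via explicit orbit-counting.
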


\begin{proof}
    Suppose, without loss of generality, that $\alpha_i > \alpha_j + \alpha_k$, where $i,j,k$ are equal to $1,3,6$ in some order. Then, for any $\delta \in S_\alpha$, the sum $\delta_1 + \delta_3 + \delta_6$ is positive if and only if $\delta_i = \alpha_i$. We observe that either the second or third inequality in (\ref{eq1:general_simple}) does not involve $\delta_i$. Suppose the third inequality does not involve $\delta_i$. If $\delta_2 + \delta_3 + \delta_5 < 0$, then $r(\delta) = 0$, and if $\delta_2 + \delta_3 + \delta_5 = 0$, then $r(\delta) \leq 1/2$. Since $\delta_2 + \delta_3 + \delta_5$ is positive or negative with equal probability, and it is also independent of $\delta_i$, it follows that, among all $\delta \in S_\alpha$ that satisfy $\delta_i = \alpha_i$, $r(\delta) \leq 1/2$ on average. Since $r(\delta) = 0$ whenever $\delta_i = -\alpha_i$, it follows that $r(\delta) \leq 1/4$ on average.
\end{proof}

\begin{proof}[Proof of Lemma \ref{lem1:main}.]
Since $\alpha_i > 0$ for all $i$, $S_\alpha$ has $64$ distinct elements, so we need to show that $$\sum_{\delta \in S_\alpha} r(\delta) \leq 16.$$ We simply need to calculate this sum for the various cases that arise, depending on certain relations between the $\alpha_i$. Let $i,j,k$ be distinct, and $\delta_i, \delta_j, \delta_k$ appear in the same equation in (\ref{eq1:general_simple}). Using Observation \ref{obs1:good}, we can always assume that $\alpha_i + \alpha_j \geq \alpha_k$. We will say that $\alpha_k$ is \emph{big} in some equation if $\alpha_k = \alpha_i + \alpha_j$. For example, `$\alpha_5$ is big in the second equation' will mean that $\alpha_5 = \alpha_1 + \alpha_4$. We will consider four different cases, depending on how many of the equations in (\ref{eq1:general_simple}) can equal zero, as $\delta$ varies over $S_\alpha$. For any $\delta$, we define $$I_\delta^+ \coloneqq \{i \in \{1, \dots, 6\}:\delta_i > 0\},$$ and $$I_\delta^- \coloneqq \{i \in \{1, \dots, 6\}:\delta_i < 0\}.$$

\vspace{\baselineskip}

\textbf{Case 1: Equality in none of the equations.}\par\nopagebreak
Firstly, suppose that no equation can equal zero, which means that $\alpha_i + \alpha_j  > \alpha_k$, whenever $i,j,k$ are distinct and $\delta_i, \delta_j, \delta_k$ appear in the same equation. This will be the easiest case, as $r(\delta)$ can only be equal to $0$ or $1$ here. More precisely, $r(\delta) = 1$ if at least two of the three $\delta_i$ that appear in each equation are positive (that is, equal to $\alpha_i$), and $r(\delta) = 0$, otherwise. 

We can immediately see that this cannot be satisfied when at most two of the $\delta_i$ are positive. If exactly three of the $\delta_i$ are positive, then $r(\delta) = 1$ if and only if $\delta = (\alpha_1, -\alpha_2, \alpha_3, -\alpha_4, \alpha_5, -\alpha_6)$, since we need at least six positive $\delta_i$ in total (over the three equations in (\ref{eq1:general_simple})), and this is only possible when $\delta_i$ is positive for all odd $i$. If exactly four of the $\delta_i$ are positive, then $r(\delta) = 1$ if and only if the values of $i$ for which $\delta_i$ is negative are both even (which gives three cases, namely the pairs $\{2,4\}, \{2,6\}, \{4,6\}$), or one of them is odd and one of them is even, and the corresponding $\delta_i$ never appear in the same equation (which gives another three cases, namely the pairs $\{1,2\}, \{3,4\}, \{5,6\}$). Therefore, we get six values of $\delta$ with exactly four positive $\delta_i$, such that $r(\delta) = 1$. Finally, if at least five of the $\delta_i$ are positive, then $r(\delta) = 1$, which gives seven more values of $\delta$. In total, there are $14$ values of $\delta$ for which $r(\delta) = 1$, so we get $$\sum_{\delta \in S_\alpha} r(\delta) = 14 < 16.$$

\vspace{\baselineskip}

\textbf{Case 2: Equality in exactly one equation.}\par\nopagebreak
Due to symmetry, we can assume that equality can hold in the first equation. We need to consider two different subcases: either $\alpha_6 = \alpha_1 + \alpha_3$, or $\alpha_3 = \alpha_1 + \alpha_6$ (which gives the same result as $\alpha_1 = \alpha_3 + \alpha_6$, by symmetry). The results are summarized in Table \ref{table:one_equality}.

\textbf{Subcase 1: $\boldsymbol{\alpha_6 = \alpha_1 + \alpha_3}$.}\par\nopagebreak
In order for $r(\delta)$ to be positive, we need at least five positive $\delta_i$ in total over the three equations in (\ref{eq1:general_simple}) (at least one in the first equation, and at least two in each of the other two equations). Therefore, $r(\delta) = 0$ if $\delta_i$ is positive for at most two values of $i$. If exactly three of the $\delta_i$ are positive, then $r(\delta) = 1/2$, if $\delta_1, \delta_3, \delta_5$ are positive (equality holds in first equation), and $r(\delta) = 0$, otherwise. If exactly four of the $\delta_i$ are positive, then
$$r(\delta) = 
\begin{cases}
1, & \text{if } I_\delta^- = \{1,2\}, \{2,4\}, \{3,4\}, \\
1/2, & \text{if } I_\delta^- = \{1,3\}, \{2,6\}, \{4,6\}, \{5,6\}, \\
0, & \text{otherwise}.
\end{cases}$$
If at least five of the $\delta_i$ are positive, then $r(\delta) = 1$, unless $\delta_6 = - \alpha_6$, in which case $r(\delta) = 1/2$. Putting everything together, we get $$\sum_{\delta \in S_\alpha} r(\delta) = 12 < 16.$$

\vspace{\baselineskip}

\textbf{Subcase 2: $\boldsymbol{\alpha_3 = \alpha_1 + \alpha_6}$.}\par\nopagebreak
Once again, $r(\delta) = 0$, if $\abs{I_\delta^+} \leq 2$. If $\abs{I_\delta^+} = 3$, then
$$r(\delta) = 
\begin{cases}
1, & \text{if } I_\delta^+ = \{1,3,5\}, \\
1/2, & \text{if } I_\delta^+ = \{3,4,5\}, \\
0, & \text{otherwise}.
\end{cases}$$
If $\abs{I_\delta^+} = 4$, then 
$$r(\delta) = 
\begin{cases}
1, & \text{if } I_\delta^- = \{1,2\}, \{2,4\}, \{2,6\}, \{4,6\}, \{5,6\}, \\
1/2, & \text{if } I_\delta^- = \{1,6\}, \{3,4\}, \\
0, & \text{otherwise}.
\end{cases}$$
If at least five of the $\delta_i$ are positive, then $r(\delta) = 1$, unless $\delta_3 = - \alpha_3$, in which case $r(\delta) = 1/2$. Putting everything together, we get $$\sum_{\delta \in S_\alpha} r(\delta) = 14 < 16.$$

\begin{table}[ht]
\centering
    \begin{tabular}{|c|c|c|}
    \hline
    Subcase & 1st equation & $\sum_{\delta \in S_\alpha} r(\delta)$ \\ [1ex] 
    \hline\hline
    1 & $\alpha_6 = \alpha_1 + \alpha_3$ & $12$ \\ [0.5ex]
    \hline
    2 & $\alpha_3 = \alpha_1 + \alpha_6$ & $14$ \\ [1ex]
    \hline
    \end{tabular}
    \caption{Summary of subcases when $\alpha_i > 0$ for all $i$, and equality can only hold in the first equation in (\ref{eq1:general_simple}).}
    \label{table:one_equality}
\end{table}

\vspace{\baselineskip}

\textbf{Case 3: Equality in exactly two equations.}\par\nopagebreak
Due to symmetry, we can assume that equality can hold in the first two equations. Nine subcases arise here, but these can be reduced to only six distinct subcases by taking advantage of the symmetry between $\delta_3$ and $\delta_5$, as well as between $\delta_4$ and $\delta_6$. The results are summarized in Table \ref{table:two_equalities}.

\textbf{Subcase 1: $\boldsymbol{\alpha_1 = \alpha_3 + \alpha_6}$ and $\boldsymbol{\alpha_1 = \alpha_4 + \alpha_5}$.}\par\nopagebreak
If $\abs{I_\delta^+} \leq 2$, then $r(\delta) = 0$. If $\abs{I_\delta^+} = 3$, then
$$r(\delta) = 
\begin{cases}
1, & \text{if } I_\delta^+ = \{1,3,5\}, \\
1/2, & \text{if } I_\delta^+ = \{1,2,3\}, \{1,2,5\}, \\
0, & \text{otherwise}.
\end{cases}$$
If $\abs{I_\delta^+} = 4$, then 
$$r(\delta) = 
\begin{cases}
1, & \text{if } I_\delta^- = \{2,4\}, \{2,6\}, \{3,4\}, \{4,6\}, \{5,6\}, \\
1/2, & \text{if } I_\delta^- = \{3,6\}, \{4,5\}, \\
1/3, & \text{if } I_\delta^- = \{1,2\}, \\
0, & \text{otherwise}.
\end{cases}$$
If at least five of the $\delta_i$ are positive, then $r(\delta) = 1$, unless $\delta_1 = - \alpha_1$, in which case $r(\delta) = 1/3$. Putting everything together, we get $$\sum_{\delta \in S_\alpha} r(\delta) = 14\frac{2}{3} < 16.$$

\vspace{\baselineskip}

\textbf{Subcase 2: $\boldsymbol{\alpha_1 = \alpha_3 + \alpha_6}$ and $\boldsymbol{\alpha_4 = \alpha_1 + \alpha_5}$.}\par\nopagebreak
Note that this gives the same result as $\alpha_6 = \alpha_1 + \alpha_3$ and $\alpha_1 = \alpha_4 + \alpha_5$. If $\abs{I_\delta^+} \leq 2$, then $r(\delta) = 0$. If $\abs{I_\delta^+} = 3$, then
$$r(\delta) = 
\begin{cases}
1/2, & \text{if } I_\delta^+ = \{1,3,5\}, \\
1/3, & \text{if } I_\delta^+ = \{1,2,5\}, \\
0, & \text{otherwise}.
\end{cases}$$
If $\abs{I_\delta^+} = 4$, then 
$$r(\delta) = 
\begin{cases}
1, & \text{if } I_\delta^- = \{2,6\}, \{5,6\}, \\
1/2, & \text{if } I_\delta^- = \{1,2\}, \{2,4\}, \{3,4\}, \{3,6\}, \{4,6\}, \\
1/3, & \text{if } I_\delta^- = \{1,5\}, \\
0, & \text{otherwise}.
\end{cases}$$
If at least five of the $\delta_i$ are positive, then $r(\delta) = 1$, unless either $\delta_1 = - \alpha_1$ or $\delta_4 = -\alpha_4$, in which case $r(\delta) = 1/2$. Putting everything together, we get $$\sum_{\delta \in S_\alpha} r(\delta) = 11\frac{2}{3} < 16.$$

\vspace{\baselineskip}

\textbf{Subcase 3: $\boldsymbol{\alpha_1 = \alpha_3 + \alpha_6}$ and $\boldsymbol{\alpha_5 = \alpha_1 + \alpha_4}$.}\par\nopagebreak
Note that this gives the same result as $\alpha_3 = \alpha_1 + \alpha_6$ and $\alpha_1 = \alpha_4 + \alpha_5$. If $\abs{I_\delta^+} \leq 2$, then $r(\delta) = 0$. If $\abs{I_\delta^+} = 3$, then
$$r(\delta) = 
\begin{cases}
1, & \text{if } I_\delta^+ = \{1,3,5\}, \\
1/2, & \text{if } I_\delta^+ = \{1,2,5\}, \\
1/3, & \text{if } I_\delta^+ = \{3,5,6\}, \\
0, & \text{otherwise}.
\end{cases}$$
If $\abs{I_\delta^+} = 4$, then 
$$r(\delta) = 
\begin{cases}
1, & \text{if } I_\delta^- = \{2,4\}, \{2,6\}, \{3,4\}, \{4,6\}, \\
1/2, & \text{if } I_\delta^- = \{1,2\}, \{3,6\}, \{5,6\}, \\
1/3, & \text{if } I_\delta^- = \{1,4\}, \\
0, & \text{otherwise}.
\end{cases}$$
If at least five of the $\delta_i$ are positive, then $r(\delta) = 1$, unless either $\delta_1 = - \alpha_1$ or $\delta_5 = -\alpha_5$, in which case $r(\delta) = 1/2$. Putting everything together, we get $$\sum_{\delta \in S_\alpha} r(\delta) = 13\frac{2}{3} < 16.$$

\vspace{\baselineskip}

\textbf{Subcase 4: $\boldsymbol{\alpha_3 = \alpha_1 + \alpha_6}$ and $\boldsymbol{\alpha_5 = \alpha_1 + \alpha_4}$.}\par\nopagebreak
If $\abs{I_\delta^+} \leq 2$, then $r(\delta) = 0$, unless $I_\delta^+ = \{3,5\}$, in which case $r(\delta) = 1/3$. If $\abs{I_\delta^+} = 3$, then
$$r(\delta) = 
\begin{cases}
1, & \text{if } I_\delta^+ = \{1,3,5\}, \\
1/2, & \text{if } I_\delta^+ = \{3,4,5\}, \{3,5,6\} \\
1/3, & \text{if } I_\delta^+ = \{2,3,5\}, \\
0, & \text{otherwise}.
\end{cases}$$
If $\abs{I_\delta^+} = 4$, then 
$$r(\delta) = 
\begin{cases}
1, & \text{if } I_\delta^- = \{1,2\}, \{2,4\}, \{2,6\}, \{4,6\}, \\
1/2, & \text{if } I_\delta^- = \{1,4\}, \{1,6\}, \{3,4\}, \{5,6\}, \\
0, & \text{otherwise}.
\end{cases}$$
If at least five of the $\delta_i$ are positive, then $r(\delta) = 1$, unless either $\delta_3 = - \alpha_3$ or $\delta_5 = -\alpha_5$, in which case $r(\delta) = 1/2$. Putting everything together, we get $$\sum_{\delta \in S_\alpha} r(\delta) = 14\frac{2}{3} < 16.$$

\vspace{\baselineskip}

\textbf{Subcase 5: $\boldsymbol{\alpha_6 = \alpha_1 + \alpha_3}$ and $\boldsymbol{\alpha_5 = \alpha_1 + \alpha_4}$.}\par\nopagebreak
Note that this gives the same result as $\alpha_3 = \alpha_1 + \alpha_6$ and $\alpha_4 = \alpha_1 + \alpha_5$. If $\abs{I_\delta^+} \leq 2$, then $r(\delta) = 0$. If $\abs{I_\delta^+} = 3$, then
$$r(\delta) = 
\begin{cases}
1/2, & \text{if } I_\delta^+ = \{1,3,5\}, \{3,5,6\}, \\
1/3, & \text{if } I_\delta^+ = \{2,5,6\}, \\
0, & \text{otherwise}.
\end{cases}$$
If $\abs{I_\delta^+} = 4$, then 
$$r(\delta) = 
\begin{cases}
1, & \text{if } I_\delta^- = \{1,2\}, \{2,4\}, \{3,4\}, \\
1/2, & \text{if } I_\delta^- = \{1,3\}, \{1,4\}, \{2,6\}, \{4,6\}, \\
1/3, & \text{if } I_\delta^- = \{5,6\}, \\
0, & \text{otherwise}.
\end{cases}$$
If at least five of the $\delta_i$ are positive, then $r(\delta) = 1$, unless either $\delta_5 = - \alpha_5$ or $\delta_6 = -\alpha_6$, in which case $r(\delta) = 1/2$. Putting everything together, we get $$\sum_{\delta \in S_\alpha} r(\delta) = 12\frac{2}{3} < 16.$$

\vspace{\baselineskip}

\textbf{Subcase 6: $\boldsymbol{\alpha_6 = \alpha_1 + \alpha_3}$ and $\boldsymbol{\alpha_4 = \alpha_1 + \alpha_5}$.}\par\nopagebreak
If $\abs{I_\delta^+} \leq 2$, then $r(\delta) = 0$. If $\abs{I_\delta^+} = 3$, then
$$r(\delta) = 
\begin{cases}
1/3, & \text{if } I_\delta^+ = \{1,3,5\}, \\
0, & \text{otherwise}.
\end{cases}$$
If $\abs{I_\delta^+} = 4$, then 
$$r(\delta) = 
\begin{cases}
1, & \text{if } I_\delta^- = \{1,2\}, \\
1/2, & \text{if } I_\delta^- = \{1,3\}, \{1,5\}, \{2,4\}, \{2,6\}, \{3,4\}, \{5,6\}, \\
1/3, & \text{if } I_\delta^- = \{4,6\}, \\
0, & \text{otherwise}.
\end{cases}$$
If at least five of the $\delta_i$ are positive, then $r(\delta) = 1$, unless either $\delta_4 = - \alpha_4$ or $\delta_6 = -\alpha_6$, in which case $r(\delta) = 1/2$. Putting everything together, we get $$\sum_{\delta \in S_\alpha} r(\delta) = 10\frac{2}{3} < 16.$$

\begin{table}[ht]
\centering
    \begin{tabular}{|c|c|c|c|}
    \hline
    Subcase & 1st equation & 2nd equation & $\sum_{\delta \in S_\alpha} r(\delta)$ \\ [1ex] 
    \hline\hline
    1 & $\alpha_1 = \alpha_3 + \alpha_6$ & $\alpha_1 = \alpha_4 + \alpha_5$ & $14\frac{2}{3}$ \\ [1ex]
    \hline
    2 & $\alpha_1 = \alpha_3 + \alpha_6$ & $\alpha_4 = \alpha_1 + \alpha_5$ & $11\frac{2}{3}$ \\ [1ex]
    \hline
    3 & $\alpha_1 = \alpha_3 + \alpha_6$ & $\alpha_5 = \alpha_1 + \alpha_4$ & $13\frac{2}{3}$ \\ [1ex]
    \hline
    4 & $\alpha_3 = \alpha_1 + \alpha_6$ & $\alpha_5 = \alpha_1 + \alpha_4$ & $14\frac{2}{3}$ \\ [1ex]
    \hline
    5 & $\alpha_6 = \alpha_1 + \alpha_3$ & $\alpha_5 = \alpha_1 + \alpha_4$ & $12\frac{2}{3}$ \\ [1ex]
    \hline
    6 & $\alpha_6 = \alpha_1 + \alpha_3$ & $\alpha_4 = \alpha_1 + \alpha_5$ & $10\frac{2}{3}$ \\ [1ex]
    \hline
    \end{tabular}
    \caption{Summary of subcases when $\alpha_i > 0$ for all $i$, and equality can only hold in the first two equations in (\ref{eq1:general_simple}).}
    \label{table:two_equalities}
\end{table}

\vspace{\baselineskip}

\textbf{Case 4: Equality in all three equations.}\par\nopagebreak
There is a total of 27 different subcases here. These can be reduced to only seven distinct subcases, by taking advantage of the symmetry between $\delta_1$, $\delta_3$, and $\delta_5$, as well as between $\delta_2$, $\delta_4$, and $\delta_6$. However, one of these subcases would be given by $\alpha_1 = \alpha_3 + \alpha_6$, $\alpha_5 = \alpha_1 + \alpha_4$, and $\alpha_3 = \alpha_2 + \alpha_5$, which is not possible as it would imply $\alpha_1 > \alpha_3 > \alpha_5 > \alpha_1$. This subcase occurs two times, namely when $\alpha_i$ is big for some odd $i$ in every equation, and no $\alpha_i$ is big more than once. The results are summarized in Table \ref{table:three_equalities}.

\textbf{Subcase 1: $\boldsymbol{\alpha_1 = \alpha_3 + \alpha_6}$, $\boldsymbol{\alpha_1 = \alpha_4 + \alpha_5}$ and $\boldsymbol{\alpha_3 = \alpha_2 + \alpha_5}$.}\par\nopagebreak
This subcase occurs when there exists some odd $i$ such that $\alpha_i$ is big in two equations, and in the remaining equation $\alpha_j$ is big for some odd $j$. There are three choices for $i$, and then two choices for $j$, thus a total of six choices. If $\abs{I_\delta^+} \leq 2$, then $r(\delta) = 0$, unless $I_\delta^+ = \{1,3\}$, in which case $r(\delta) = 1/3$. If $\abs{I_\delta^+} = 3$, then
$$r(\delta) = 
\begin{cases}
1, & \text{if } I_\delta^+ = \{1,3,5\}, \\
1/2, & \text{if } I_\delta^+ = \{1,2,3\}, \{1,3,4\}, \\
1/3, & \text{if } I_\delta^+ = \{1,2,5\}, \{1,3,6\}, \\
0, & \text{otherwise}.
\end{cases}$$
If $\abs{I_\delta^+} = 4$, then 
$$r(\delta) = 
\begin{cases}
1, & \text{if } I_\delta^- = \{2,4\}, \{2,6\}, \{4,6\}, \{5,6\}, \\
1/2, & \text{if } I_\delta^- = \{2,5\}, \{3,4\}, \{4,5\}, \\
1/3, & \text{if } I_\delta^- = \{1,2\}, \{3,6\}, \\
0, & \text{otherwise}.
\end{cases}$$
If at least five of the $\delta_i$ are positive, then $r(\delta) = 1$, unless $\delta_1 = -\alpha_1$, in which case $r(\delta) = 1/3$, or $\delta_3 = -\alpha_3$, in which case $r(\delta) = 1/2$. Putting everything together, we get $$\sum_{\delta \in S_\alpha} r(\delta) = 15 < 16.$$

\vspace{\baselineskip}

\textbf{Subcase 2: $\boldsymbol{\alpha_1 = \alpha_3 + \alpha_6}$, $\boldsymbol{\alpha_1 = \alpha_4 + \alpha_5}$ and $\boldsymbol{\alpha_2 = \alpha_3 + \alpha_5}$.}\par\nopagebreak
This subcase occurs when there exists some odd $i$ such that $\alpha_i$ is big in two equations, and in the remaining equation $\alpha_j$ is big for some even $j$. There are three choices for $i$, and then only one choice for $j$, thus a total of three choices. If $\abs{I_\delta^+} \leq 2$, then $r(\delta) = 0$, unless $I_\delta^+ = \{1,2\}$, in which case $r(\delta) = 1/4$. If $\abs{I_\delta^+} = 3$, then
$$r(\delta) = 
\begin{cases}
1/2, & \text{if } I_\delta^+ = \{1,2,3\}, \{1,2,5\}, \{1,3,5\}, \\
1/3, & \text{if } I_\delta^+ = \{1,2,4\}, \{1,2,6\}, \\
0, & \text{otherwise}.
\end{cases}$$
If $\abs{I_\delta^+} = 4$, then 
$$r(\delta) = 
\begin{cases}
1, & \text{if } I_\delta^- = \{3,4\}, \{4,6\}, \{5,6\}, \\
1/2, & \text{if } I_\delta^- = \{2,4\}, \{2,6\}, \{3,5\}, \{3,6\}, \{4,5\}, \\
1/4, & \text{if } I_\delta^- = \{1,2\}, \\
0, & \text{otherwise}.
\end{cases}$$
If at least five of the $\delta_i$ are positive, then $r(\delta) = 1$, unless $\delta_1 = -\alpha_1$, in which case $r(\delta) = 1/3$, or $\delta_2 = -\alpha_2$, in which case $r(\delta) = 1/2$. Putting everything together, we get $$\sum_{\delta \in S_\alpha} r(\delta) = 14 < 16.$$

\vspace{\baselineskip}

\textbf{Subcase 3: $\boldsymbol{\alpha_6 = \alpha_1 + \alpha_3}$, $\boldsymbol{\alpha_1 = \alpha_4 + \alpha_5}$ and $\boldsymbol{\alpha_5 = \alpha_2 + \alpha_3}$.}\par\nopagebreak
This subcase occurs when there exists some even $i$ such that $\alpha_i$ is big in some equation, say in equation ($*$), $\alpha_j$ is big in one of the other two equations, where $j$ is the unique odd index that does not appear in equation ($*$), and in the remaining equation $\alpha_k$ is big for some odd $k \neq j$. There are three choices for $i$, then two choices for the equation in which $\alpha_j$ is big, and then only one choice for $k$, thus a total of six choices. If $\abs{I_\delta^+} \leq 2$, then $r(\delta) = 0$. If $\abs{I_\delta^+} = 3$, then
$$r(\delta) = 
\begin{cases}
1/2, & \text{if } I_\delta^+ = \{1,3,5\}, \{1,5,6\}, \\
1/4, & \text{if } I_\delta^+ = \{1,2,3\}, \{4,5,6\}, \\
0, & \text{otherwise}.
\end{cases}$$
If $\abs{I_\delta^+} = 4$, then 
$$r(\delta) = 
\begin{cases}
1, & \text{if } I_\delta^- = \{2,4\}, \{3,4\}, \\
1/2, & \text{if } I_\delta^- = \{1,2\}, \{2,3\}, \{2,6\}, \{4,6\}, \\
1/3, & \text{if } I_\delta^- = \{1,3\}, \{4,5\}, \{5,6\}, \\
0, & \text{otherwise}.
\end{cases}$$
If at least five of the $\delta_i$ are positive, then $r(\delta) = 1$, unless $\delta_j = -\alpha_j$, for some $j \in \{1,5,6\}$, in which case $r(\delta) = 1/2$. Putting everything together, we get $$\sum_{\delta \in S_\alpha} r(\delta) = 12 < 16.$$

\vspace{\baselineskip}

\textbf{Subcase 4: $\boldsymbol{\alpha_6 = \alpha_1 + \alpha_3}$, $\boldsymbol{\alpha_1 = \alpha_4 + \alpha_5}$ and $\boldsymbol{\alpha_3 = \alpha_2 + \alpha_5}$.}\par\nopagebreak
This subcase occurs when there exists some even $i$ such that $\alpha_i$ is big in some equation, say in equation ($*$), and the $\alpha_j$ and $\alpha_k$ that also appear in ($*$), where $j$ and $k$ are distinct and both odd, are big in one of the other two equations. There are three choices for $i$, and then we know which $\alpha_l$ is big in each of the other two equations, thus there is a total of three choices. If $\abs{I_\delta^+} \leq 2$, then $r(\delta) = 0$, unless $I_\delta^+ = \{1,3\}$, in which case $r(\delta) = 1/4$. If $\abs{I_\delta^+} = 3$, then
$$r(\delta) = 
\begin{cases}
1/2, & \text{if } I_\delta^+ = \{1,3,5\}, \\
1/3, & \text{if } I_\delta^+ = \{1,2,3\}, \{1,3,4\}, \{1,3,6\}, \\
0, & \text{otherwise}.
\end{cases}$$
If $\abs{I_\delta^+} = 4$, then 
$$r(\delta) = 
\begin{cases}
1, & \text{if } I_\delta^- = \{2,4\}, \\
1/2, & \text{if } I_\delta^- = \{1,2\}, \{2,5\}, \{2,6\}, \{3,4\}, \{4,5\}, \{4,6\}, \{5,6\}, \\
1/4, & \text{if } I_\delta^- = \{1,3\}, \\
0, & \text{otherwise}.
\end{cases}$$
If at least five of the $\delta_i$ are positive, then $r(\delta) = 1$, unless $\delta_j = -\alpha_j$, for some $j \in \{1,3,6\}$, in which case $r(\delta) = 1/2$. Putting everything together, we get $$\sum_{\delta \in S_\alpha} r(\delta) = 12 < 16.$$

\vspace{\baselineskip}

\textbf{Subcase 5: $\boldsymbol{\alpha_6 = \alpha_1 + \alpha_3}$, $\boldsymbol{\alpha_4 = \alpha_1 + \alpha_5}$ and $\boldsymbol{\alpha_3 = \alpha_2 + \alpha_5}$.}\par\nopagebreak
This subcase occurs when there exist some distinct even $i,j$, such that $\alpha_i$ and $\alpha_j$ are both big in their respective equations, and in the remaining equation $\alpha_k$ is big for some odd $k$. There are three choices for the pair $i,j$, and then two choices for $k$, thus a total of six choices. If $\abs{I_\delta^+} \leq 2$, then $r(\delta) = 0$. If $\abs{I_\delta^+} = 3$, then
$$r(\delta) = 
\begin{cases}
1/3, & \text{if } I_\delta^+ = \{1,3,4\}, \{1,3,5\}, \{3,4,6\}, \\
0, & \text{otherwise}.
\end{cases}$$
If $\abs{I_\delta^+} = 4$, then 
$$r(\delta) = 
\begin{cases}
1, & \text{if } I_\delta^- = \{1,2\}, \\
1/2, & \text{if } I_\delta^- = \{1,5\}, \{2,4\}, \{2,5\}, \{2,6\}, \{5,6\}, \\
1/3, & \text{if } I_\delta^- = \{1,3\}, \{3,4\}, \{4,6\}, \\
0, & \text{otherwise}.
\end{cases}$$
If at least five of the $\delta_i$ are positive, then $r(\delta) = 1$, unless $\delta_j = -\alpha_j$, for some $j \in \{3,4,6\}$, in which case $r(\delta) = 1/2$. Putting everything together, we get $$\sum_{\delta \in S_\alpha} r(\delta) = 11< 16.$$

\vspace{\baselineskip}

\textbf{Subcase 6: $\boldsymbol{\alpha_6 = \alpha_1 + \alpha_3}$, $\boldsymbol{\alpha_4 = \alpha_1 + \alpha_5}$ and $\boldsymbol{\alpha_2 = \alpha_3 + \alpha_5}$.}\par\nopagebreak
If $\abs{I_\delta^+} \leq 2$, then $r(\delta) = 0$. If $\abs{I_\delta^+} = 3$, then
$$r(\delta) = 
\begin{cases}
1/4, & \text{if } I_\delta^+ = \{1,3,5\}, \{2,4,6\}, \\
0, & \text{otherwise}.
\end{cases}$$
If $\abs{I_\delta^+} = 4$, then 
$$r(\delta) = 
\begin{cases}
1/2, & \text{if } I_\delta^- = \{1,2\}, \{1,3\}, \{1,5\}, \{3,4\}, \{3,5\}, \{5,6\}, \\
1/3, & \text{if } I_\delta^- = \{2,4\}, \{2,6\}, \{4,6\}, \\
0, & \text{otherwise}.
\end{cases}$$
If at least five of the $\delta_i$ are positive, then $r(\delta) = 1$, unless $\delta_j = -\alpha_j$, for some $j \in \{2,4,6\}$, in which case $r(\delta) = 1/2$. Putting everything together, we get $$\sum_{\delta \in S_\alpha} r(\delta) = 10 < 16.$$

\begin{table}[ht]
\centering
    \begin{tabular}{|c|c|c|c|c|}
    \hline
    Subcase & 1st equation & 2nd equation & 3rd equation & $\sum_{\delta \in S_\alpha} r(\delta)$ \\ [1ex] 
    \hline\hline
    1 & $\alpha_1 = \alpha_3 + \alpha_6$ & $\alpha_1 = \alpha_4 + \alpha_5$ & $\alpha_3 = \alpha_2 + \alpha_5$ & $15$ \\ [0.5ex]
    \hline
    2 & $\alpha_1 = \alpha_3 + \alpha_6$ & $\alpha_1 = \alpha_4 + \alpha_5$ & $\alpha_2 = \alpha_3 + \alpha_5$ & $14$ \\ [0.5ex]
    \hline
    3 & $\alpha_6 = \alpha_1 + \alpha_3$ & $\alpha_1 = \alpha_4 + \alpha_5$ & $\alpha_5 = \alpha_2 + \alpha_3$ & $12$ \\ [0.5ex]
    \hline
    4 & $\alpha_6 = \alpha_1 + \alpha_3$ & $\alpha_1 = \alpha_4 + \alpha_5$ & $\alpha_3 = \alpha_2 + \alpha_5$ & $12$ \\ [0.5ex]
    \hline
    5 & $\alpha_6 = \alpha_1 + \alpha_3$ & $\alpha_4 = \alpha_1 + \alpha_5$ & $\alpha_3 = \alpha_2 + \alpha_5$ & $11$ \\ [0.5ex]
    \hline
    6 & $\alpha_6 = \alpha_1 + \alpha_3$ & $\alpha_4 = \alpha_1 + \alpha_5$ & $\alpha_2 = \alpha_3 + \alpha_5$ & $10$ \\ [1ex]
    \hline
    \end{tabular}
    \caption{Summary of subcases when $\alpha_i > 0$ for all $i$, and equality can hold in all three equations in (\ref{eq1:general_simple}).}
    \label{table:three_equalities}
\end{table}
\end{proof}

\begin{proof}[Proof of Theorem \ref{thm1:main}.]

We already know, from Lemma \ref{lem1:main}, that $\alpha$ is good whenever $\alpha_i > 0$ for all $i$. Therefore, it is enough to prove that $\alpha$ is good when at least one of the $\alpha_i$ is equal to zero. 

Suppose there exist distinct $i,j$ such that $\delta_i$ and $\delta_j$ appear in the same equation in (\ref{eq1:general_simple}), and $\alpha_i = \alpha_j = 0$. Let $\delta_k$ also appear in that equation. If $\alpha_k > 0$, then we are done by Observation \ref{obs1:good}. So we can assume that $\alpha_k$ is also equal to zero. Without loss of generality, suppose that $\alpha_1 = \alpha_3 = \alpha_6 = 0$. If $\alpha_5 \neq \alpha_4$ or $\alpha_5 \neq \alpha_2$, then we are done by Observation \ref{obs1:good}. Suppose that $\alpha_2 = \alpha_4 = \alpha_5$. If they are all equal to $0$, then $\alpha$ is trivially good, since $r((0, \dots, 0)) = 1/4 = \abs{S_\alpha}/4$. If they are not equal to $0$, we get
$$r(\delta) = 
\begin{cases}
1/2, & \text{if } I_\delta^+ = \{2,4,5\}, \\
1/3, & \text{if } I_\delta^+ = \{2,5\}, \{4,5\}, \\
1/4, & \text{if } I_\delta^+ = \{2,4\}, \{5\}, \\
0, & \text{otherwise},
\end{cases}$$
which gives
$$\sum_{\delta \in S_\alpha} r(\delta) = 1\frac{2}{3} < 2 = \frac{\abs{S_\alpha}}{4}.$$

We can hereby assume that, if the distinct indices $i,j$ appear in the same equation in (\ref{eq1:general_simple}), then at most one of $\alpha_i$ and $\alpha_j$ is equal to $0$. This can only happen if at most three of the $\alpha_i$ are equal to $0$. If there are exactly three, they have to be $\alpha_2, \alpha_4$, and $\alpha_6$. In this case, we can assume that $\alpha_1 = \alpha_3 = \alpha_5$, since Observation \ref{obs1:good} implies that $\alpha$ is good in any other case. We have
$$r(\delta) = 
\begin{cases}
1, & \text{if } \abs{I_\delta^-} = 0, \\
1/3, & \text{if } \abs{I_\delta^-} = 1, \\
0, & \text{otherwise},
\end{cases}$$
which gives
$$\sum_{\delta \in S_\alpha} r(\delta) = 2 = \frac{\abs{S_\alpha}}{4}.$$

\vspace{\baselineskip}

Suppose now that exactly two of the $\alpha_i$ are equal to $0$. There are two cases to consider, up to symmetry. In the first case, $\alpha_2 = \alpha_4 = 0$. Using Observation \ref{obs1:good}, we can assume that $\alpha_1 = \alpha_3 = \alpha_5$, and also $\alpha_6 \leq \alpha_1 + \alpha_3$. If $\alpha_6 = \alpha_1 + \alpha_3$, then
$$r(\delta) = 
\begin{cases}
1, & \text{if } I_\delta^- = \emptyset, \\
1/2, & \text{if } I_\delta^- = \{1\}, \{3\}, \{6\}, \\
1/3, & \text{if } I_\delta^-= \{5\}, \\
1/4, & \text{if } I_\delta^- = \{1,3\}, \{5,6\}, \\
0, & \text{otherwise},
\end{cases}$$
which gives
$$\sum_{\delta \in S_\alpha} r(\delta) = 3\frac{1}{3} < 4 = \frac{\abs{S_\alpha}}{4},$$
and, if $\alpha_6 < \alpha_1 + \alpha_3$, then
$$r(\delta) = 
\begin{cases}
1, & \text{if } I_\delta^- = \emptyset, \{6\}, \\
1/2, & \text{if } I_\delta^- = \{1\}, \{3\}, \\
1/3, & \text{if } I_\delta^-= \{5\}, \{5,6\}, \\
0, & \text{otherwise},
\end{cases}$$
which gives
$$\sum_{\delta \in S_\alpha} r(\delta) = 3\frac{2}{3} < 4 = \frac{\abs{S_\alpha}}{4}.$$

\vspace{\baselineskip}

In the second case, $\alpha_1 = \alpha_2 = 0$. Using Observation \ref{obs1:good}, we can assume that $\alpha_3 = \alpha_6$, $\alpha_4 = \alpha_5$, and $\alpha_3 = \alpha_5$. Then
$$r(\delta) = 
\begin{cases}
1, & \text{if } I_\delta^- = \emptyset, \\
1/2, & \text{if } I_\delta^- = \{4\}, \{6\}, \\
1/3, & \text{if } I_\delta^-= \{3\}, \{5\}, \{4,6\}, \\
1/4, & \text{if } I_\delta^-= \{3,4\}, \{5,6\}, \\
0, & \text{otherwise},
\end{cases}$$
which gives
$$\sum_{\delta \in S_\alpha} r(\delta) = 3\frac{1}{2} < 4 = \frac{\abs{S_\alpha}}{4}.$$

\vspace{\baselineskip}

We finally check what happens when exactly one of the $\alpha_i$ is equal to $0$. In this case, $S_\alpha$ has $32$ distinct elements, so we need to show that $$\sum_{\delta \in S_\alpha} r(\delta) \leq 8.$$ Once again, we consider two different cases, up to symmetry. 

\textbf{Case 1: $\boldsymbol{\alpha_1=0}$.}\par\nopagebreak
Using Observation \ref{obs1:good}, we can assume that $\alpha_3 = \alpha_6$, and $\alpha_4 = \alpha_5$. We will consider three different subcases, up to symmetry. The results are summarized in Table \ref{table:a1=0}.

\textbf{Subcase 1: No equality in the third equation in (\ref{eq1:general_simple}).}\par\nopagebreak
If $\abs{I_\delta^+} \leq 1$, then $r(\delta) = 0$. If $\abs{I_\delta^+} = 2$, then 
$$r(\delta) = 
\begin{cases}
1/3, & \text{if } I_\delta^+ = \{3,5\}, \\
0, & \text{otherwise}.
\end{cases}$$
If $\abs{I_\delta^+} = 3$, then
$$r(\delta) = 
\begin{cases}
1/2, & \text{if } I_\delta^- = \{2,4\}, \{2,6\}, \\
1/3, & \text{if } I_\delta^- = \{3,4\}, \{4,6\}, \{5,6\}, \\
0, & \text{otherwise}.
\end{cases}$$
If $\abs{I_\delta^+} = 4$, then $r(\delta) = 1/2$, unless $\delta_2 = -\alpha_2$, in which case $r(\delta) = 1$. If $\abs{I_\delta^+} = 5$, then $r(\delta) = 1$. Putting everything together, we get $$\sum_{\delta \in S_\alpha} r(\delta) = 6\frac{1}{3} < 8.$$

\vspace{\baselineskip}

\textbf{Subcase 2: $\boldsymbol{\alpha_2 = \alpha_3 + \alpha_5}$.}\par\nopagebreak
If $\abs{I_\delta^+} \leq 1$, then $r(\delta) = 0$. If $\abs{I_\delta^+} = 2$, then 
$$r(\delta) = 
\begin{cases}
1/4, & \text{if } I_\delta^+ = \{3,5\}, \\
0, & \text{otherwise}.
\end{cases}$$
If $\abs{I_\delta^+} = 3$, then
$$r(\delta) = 
\begin{cases}
1/3, & \text{if } I_\delta^- = \{2,4\}, \{2,6\}, \{3,4\}, \{4,6\}, \{5,6\}, \\
1/4, & \text{if } I_\delta^- = \{3,5\}, \\
0, & \text{otherwise}.
\end{cases}$$
If $\abs{I_\delta^+} = 4$, then $r(\delta) = 1/2$. If $\abs{I_\delta^+} = 5$, then $r(\delta) = 1$. Putting everything together, we get $$\sum_{\delta \in S_\alpha} r(\delta) = 5\frac{2}{3} < 8.$$

\vspace{\baselineskip}

\textbf{Subcase 3: $\boldsymbol{\alpha_3 = \alpha_2 + \alpha_5}$.}\par\nopagebreak
Note that this gives the same result as $\alpha_5 = \alpha_2 + \alpha_3$. If $\abs{I_\delta^+} \leq 1$, then $r(\delta) = 0$. If $\abs{I_\delta^+} = 2$, then 
$$r(\delta) = 
\begin{cases}
1/3, & \text{if } I_\delta^+ = \{3,5\}, \\
1/4, & \text{if } I_\delta^+ = \{3,4\}, \\
0, & \text{otherwise}.
\end{cases}$$
If $\abs{I_\delta^+} = 3$, then
$$r(\delta) = 
\begin{cases}
1/2, & \text{if } I_\delta^- = \{2,4\}, \{2,6\}, \\
1/3, & \text{if } I_\delta^- = \{2,5\}, \{4,6\}, \{5,6\}, \\
1/4, & \text{if } I_\delta^- = \{3,4\}, \\
0, & \text{otherwise}.
\end{cases}$$
If $\abs{I_\delta^+} = 4$, then $r(\delta) = 1/2$, unless $I_\delta^- = \{2\}$, in which case $r(\delta) = 1$, or $I_\delta^- = \{3\}$, in which case $r(\delta) = 1/3$. If $\abs{I_\delta^+} = 5$, then $r(\delta) = 1$. Putting everything together, we get $$\sum_{\delta \in S_\alpha} r(\delta) = 6\frac{2}{3} < 8.$$

\begin{table}[ht]
\centering
    \begin{tabular}{|c|c|c|}
    \hline
    Subcase & 3rd equation in (\ref{eq1:general_simple}) & $\sum_{\delta \in S_\alpha} r(\delta)$ \\ [1ex] 
    \hline\hline
    1 & No equality & $6\frac{1}{3}$ \\ [1ex]
    \hline
    2 & $\alpha_2 = \alpha_3 + \alpha_5$ & $5\frac{2}{3}$ \\ [1ex]
    \hline
    3 & $\alpha_3 = \alpha_2 + \alpha_5$ & $6\frac{2}{3}$ \\ [1ex] 
    \hline
    \end{tabular}
    \caption{Summary of subcases when $\alpha_1 = 0$, and no other $\alpha_i$ is zero.}
    \label{table:a1=0}
\end{table}

\vspace{\baselineskip}

\textbf{Case 2: $\boldsymbol{\alpha_6=0}$.}\par\nopagebreak
Using Observation \ref{obs1:good}, we can assume that $\alpha_1 = \alpha_3$. We will consider nine different subcases, up to symmetry. The results are summarized in Table \ref{table:a6=0}. In all nine subcases, if $\abs{I_\delta^+} \leq 1$, then $r(\delta) = 0$, and if $\abs{I_\delta^+} = 5$, then $r(\delta) = 1$.

\textbf{Subcase 1: No equality in second or third equation in (\ref{eq1:general_simple}).}\par\nopagebreak
If $\abs{I_\delta^+} = 2$, then $r(\delta) = 0$. If $\abs{I_\delta^+} = 3$, then
$$r(\delta) = 
\begin{cases}
1, & \text{if } I_\delta^- = \{2,4\}, \\
1/2, & \text{if } I_\delta^- = \{1,2\}, \{3,4\}, \\
0, & \text{otherwise}.
\end{cases}$$
If $\abs{I_\delta^+} = 4$, then $r(\delta) = 1$, unless $I_\delta^- = \{1\}$ or $I_\delta^- = \{3\}$, in which case $r(\delta) = 1/2$. Putting everything together, we get $$\sum_{\delta \in S_\alpha} r(\delta) = 7 < 8.$$

\vspace{\baselineskip}

In the next three subcases, equality can hold in the second equation in (\ref{eq1:general_simple}), but not in the third equation. 

\textbf{Subcase 2: $\boldsymbol{\alpha_1 = \alpha_4 + \alpha_5}$.}\par\nopagebreak
If $\abs{I_\delta^+} = 2$, then $r(\delta) = 0$. If $\abs{I_\delta^+} = 3$, then
$$r(\delta) = 
\begin{cases}
1, & \text{if } I_\delta^- = \{2,4\}, \\
1/2, & \text{if } I_\delta^- = \{3,4\}, \{4,5\}, \\
1/3, & \text{if } I_\delta^- = \{1,2\}, \\
0, & \text{otherwise}.
\end{cases}$$
If $\abs{I_\delta^+} = 4$, then $r(\delta) = 1$, unless $I_\delta^- = \{1\}$, in which case $r(\delta) = 1/3$, or $I_\delta^- = \{3\}$, in which case $r(\delta) = 1/2$. Putting everything together, we get $$\sum_{\delta \in S_\alpha} r(\delta) = 7\frac{1}{6} < 8.$$

\vspace{\baselineskip}

\textbf{Subcase 3: $\boldsymbol{\alpha_4 = \alpha_1 + \alpha_5}$.}\par\nopagebreak
If $\abs{I_\delta^+} = 2$, then $r(\delta) = 0$. If $\abs{I_\delta^+} = 3$, then
$$r(\delta) = 
\begin{cases}
1/2, & \text{if } I_\delta^- = \{1,2\}, \{2,4\}, \\
1/3, & \text{if } I_\delta^- = \{1,5\}, \{3,4\}, \\
0, & \text{otherwise}.
\end{cases}$$
If $\abs{I_\delta^+} = 4$, then $r(\delta) = 1/2$, unless $I_\delta^- = \{2\}$ or $I_\delta^- = \{5\}$, in which case $r(\delta) = 1$. Putting everything together, we get $$\sum_{\delta \in S_\alpha} r(\delta) = 6\frac{1}{6} < 8.$$

\vspace{\baselineskip}

\textbf{Subcase 4: $\boldsymbol{\alpha_5 = \alpha_1 + \alpha_4}$.}\par\nopagebreak
If $\abs{I_\delta^+} = 2$, then 
$$r(\delta) = 
\begin{cases}
1/3, & \text{if } I_\delta^+ = \{3,5\}, \\
0, & \text{otherwise}.
\end{cases}$$
If $\abs{I_\delta^+} = 3$, then
$$r(\delta) = 
\begin{cases}
1, & \text{if } I_\delta^- = \{2,4\}, \\
1/2, & \text{if } I_\delta^- = \{1,2\}, \{3,4\}, \\
1/3, & \text{if } I_\delta^- = \{1,4\}, \\
0, & \text{otherwise}.
\end{cases}$$
If $\abs{I_\delta^+} = 4$, then $r(\delta) = 1/2$, unless $I_\delta^- = \{2\}$ or $I_\delta^- = \{4\}$, in which case $r(\delta) = 1$. Putting everything together, we get $$\sum_{\delta \in S_\alpha} r(\delta) = 7\frac{1}{6} < 8.$$

\vspace{\baselineskip}

In the next five subcases, equality can hold in both the second and third equation in (\ref{eq1:general_simple}). One other possible subcase would be given by $\alpha_1 = \alpha_4 + \alpha_5$ and $\alpha_5 = \alpha_2 + \alpha_3$, which is not possible as it would imply $\alpha_1 > \alpha_5 > \alpha_3 = \alpha_1$. Similarly, $\alpha_5 = \alpha_1 + \alpha_4$ and $\alpha_3 = \alpha_2 + \alpha_5$ cannot occur.

\textbf{Subcase 5: $\boldsymbol{\alpha_1 = \alpha_4 + \alpha_5}$ and $\boldsymbol{\alpha_3 = \alpha_2 + \alpha_5}$.}\par\nopagebreak
If $\abs{I_\delta^+} = 2$, then 
$$r(\delta) = 
\begin{cases}
1/3, & \text{if } I_\delta^+ = \{1,3\}, \\
0, & \text{otherwise}.
\end{cases}$$ 
If $\abs{I_\delta^+} = 3$, then
$$r(\delta) = 
\begin{cases}
1, & \text{if } I_\delta^- = \{2,4\}, \\
1/2, & \text{if } I_\delta^- = \{2,5\}, \{4,5\}, \\
1/3, & \text{if } I_\delta^- = \{1,2\}, \{3,4\}, \\
0, & \text{otherwise}.
\end{cases}$$
If $\abs{I_\delta^+} = 4$, then $r(\delta) = 1$, unless $I_\delta^- = \{1\}$ or $I_\delta^- = \{3\}$, in which case $r(\delta) = 1/3$. Putting everything together, we get $$\sum_{\delta \in S_\alpha} r(\delta) = 7\frac{2}{3} < 8.$$

\vspace{\baselineskip}

\textbf{Subcase 6: $\boldsymbol{\alpha_1 = \alpha_4 + \alpha_5}$ and $\boldsymbol{\alpha_2 = \alpha_3 + \alpha_5}$.}\par\nopagebreak
Note that this gives the same result as $\alpha_4 = \alpha_1 + \alpha_5$ and $\alpha_3 = \alpha_2 + \alpha_5$. If $\abs{I_\delta^+} = 2$, then 
$$r(\delta) = 
\begin{cases}
1/4, & \text{if } I_\delta^+ = \{1,2\}, \\
0, & \text{otherwise}.
\end{cases}$$ 
If $\abs{I_\delta^+} = 3$, then
$$r(\delta) = 
\begin{cases}
1/2, & \text{if } I_\delta^- = \{2,4\}, \{3,4\}, \{4,5\}, \\
1/3, & \text{if } I_\delta^- = \{3,5\}, \\
1/4, & \text{if } I_\delta^- = \{1,2\}, \\
0, & \text{otherwise}.
\end{cases}$$
If $\abs{I_\delta^+} = 4$, then $r(\delta) = 1$, unless $I_\delta^- = \{2\}$ or $I_\delta^- = \{3\}$, in which case $r(\delta) = 1/2$, or $I_\delta^- = \{1\}$, in which case $r(\delta) = 1/3$. Putting everything together, we get $$\sum_{\delta \in S_\alpha} r(\delta) = 6\frac{2}{3} < 8.$$

\vspace{\baselineskip}

\textbf{Subcase 7: $\boldsymbol{\alpha_4 = \alpha_1 + \alpha_5}$ and $\boldsymbol{\alpha_5 = \alpha_2 + \alpha_3}$.}\par\nopagebreak
Note that this gives the same result as $\alpha_5 = \alpha_1 + \alpha_4$ and $\alpha_2 = \alpha_3 + \alpha_5$. If $\abs{I_\delta^+} = 2$, then 
$$r(\delta) = 
\begin{cases}
1/4, & \text{if } I_\delta^+ = \{1,5\}, \\
0, & \text{otherwise}.
\end{cases}$$ 
If $\abs{I_\delta^+} = 3$, then
$$r(\delta) = 
\begin{cases}
1/2, & \text{if } I_\delta^- = \{1,2\}, \{2,4\}, \\
1/3, & \text{if } I_\delta^- = \{2,3\}, \{3,4\}, \\
1/4, & \text{if } I_\delta^- = \{1,5\}, \\
0, & \text{otherwise}.
\end{cases}$$
If $\abs{I_\delta^+} = 4$, then $r(\delta) = 1/2$, unless $I_\delta^- = \{2\}$, in which case $r(\delta) = 1$. Putting everything together, we get $$\sum_{\delta \in S_\alpha} r(\delta) = 6\frac{1}{6} < 8.$$

\vspace{\baselineskip}

\textbf{Subcase 8: $\boldsymbol{\alpha_4 = \alpha_1 + \alpha_5}$ and $\boldsymbol{\alpha_2 = \alpha_3 + \alpha_5}$.}\par\nopagebreak
If $\abs{I_\delta^+} = 2$, then 
$r(\delta) = 0$. If $\abs{I_\delta^+} = 3$, then
$$r(\delta) = 
\begin{cases}
1/3, & \text{if } I_\delta^- = \{1,2\}, \{1,5\}, \{2,4\}, \{3,4\}, \{3,5\}, \\
0, & \text{otherwise}.
\end{cases}$$
If $\abs{I_\delta^+} = 4$, then $r(\delta) = 1/2$, unless $I_\delta^- = \{5\}$, in which case $r(\delta) = 1$. Putting everything together, we get $$\sum_{\delta \in S_\alpha} r(\delta) = 5\frac{2}{3} < 8.$$

\vspace{\baselineskip}

\textbf{Subcase 9: $\boldsymbol{\alpha_5 = \alpha_1 + \alpha_4}$ and $\boldsymbol{\alpha_5 = \alpha_2 + \alpha_3}$.}\par\nopagebreak
If $\abs{I_\delta^+} = 2$, then 
$$r(\delta) = 
\begin{cases}
1/3, & \text{if } I_\delta^+ = \{1,5\}, \{3,5\}, \\
0, & \text{otherwise}.
\end{cases}$$ 
If $\abs{I_\delta^+} = 3$, then
$$r(\delta) = 
\begin{cases}
1, & \text{if } I_\delta^- = \{2,4\}, \\
1/2, & \text{if } I_\delta^- = \{1,2\}, \{3,4\}, \\
1/3, & \text{if } I_\delta^- = \{1,4\}, \{2,3\}, \\
0, & \text{otherwise}.
\end{cases}$$
If $\abs{I_\delta^+} = 4$, then $r(\delta) = 1$, unless $I_\delta^- = \{1\}$ or $I_\delta^- = \{3\}$, in which case $r(\delta) = 1/2$, or $I_\delta^- = \{5\}$, in which case $r(\delta) = 1/3$. Putting everything together, we get $$\sum_{\delta \in S_\alpha} r(\delta) = 7\frac{2}{3} < 8.$$
\end{proof}

\begin{table}[ht]
\centering
    \begin{tabular}{|c|c|c|c|}
    \hline
    Subcase & 2nd equation in (\ref{eq1:general_simple}) & 3rd equation in (\ref{eq1:general_simple}) & $\sum_{\delta \in S_\alpha} r(\delta)$ \\ [1ex] 
    \hline\hline
    1 & No equality & No equality & $7$ \\ [1ex]
    \hline
    2 & $\alpha_1 = \alpha_4 + \alpha_5$ & No equality & $7\frac{1}{6}$ \\ [1ex]
    \hline
    3 & $\alpha_4 = \alpha_1 + \alpha_5$ & No equality & $6\frac{1}{6}$ \\ [1ex] 
    \hline
    4 & $\alpha_5 = \alpha_1 + \alpha_4$ & No equality & $7\frac{1}{6}$ \\ [1ex] 
    \hline
    5 & $\alpha_1 = \alpha_4 + \alpha_5$ & $\alpha_3 = \alpha_2 + \alpha_5$ & $7\frac{2}{3}$ \\ [1ex] 
    \hline
    6 & $\alpha_1 = \alpha_4 + \alpha_5$ & $\alpha_2 = \alpha_3 + \alpha_5$ & $6\frac{2}{3}$ \\ [1ex] 
    \hline
    7 & $\alpha_4 = \alpha_1 + \alpha_5$ & $\alpha_5 = \alpha_2 + \alpha_3$ & $6\frac{1}{6}$ \\ [1ex] 
    \hline
    8 & $\alpha_4 = \alpha_1 + \alpha_5$ & $\alpha_2 = \alpha_3 + \alpha_5$ & $5\frac{2}{3}$ \\ [1ex] 
    \hline
    9 & $\alpha_5 = \alpha_1 + \alpha_4$ & $\alpha_5 = \alpha_2 + \alpha_3$ & $7\frac{2}{3}$ \\ [1ex] 
    \hline
    \end{tabular}
    \caption{Summary of subcases when $\alpha_6 = 0$, and no other $\alpha_i$ is zero.}
    \label{table:a6=0}
\end{table}

\section{Proof that a profile in equilibrium is balanced.}
\label{sec:equil_bal}
In this section, we prove the forward direction of Theorem \ref{thm:hyp_main}, which we restate as follows.
\begin{theorem}
\label{thm2:main}
    If a profile of four players on $Q_n$ is an equilibrium, then it is balanced.
\end{theorem}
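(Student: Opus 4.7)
The plan is to prove the contrapositive: if $P$ is non-balanced, some player can strictly increase their score by moving. For each player $A_i$ the candidate move is their \emph{balancing move}, which places $A_i$ in each coordinate at the minority value among the other three players.

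\textbf{Setup.} By permuting players and flipping coordinates, I can focus on $A_4$'s balancing move from a profile in which at most one of $A_1, A_2, A_3$ plays $1$ in every coordinate, so that $A_4$'s balancing move is $x^{(4)'} = 1_n$. I partition $[n]$ into four intervals of lengths $y_1, y_2, y_3, y_4$: the first contains the coordinates in which all of $A_1, A_2, A_3$ play $0$, and the $(i+1)$th (for $i=1,2,3$) contains the coordinates in which only $A_i$ plays $1$. For any $x^* \in Q_n$, let $\beta_i$ be the number of $1$s of $x^*$ in the $i$th interval and set $\delta_i = 2\beta_i - y_i$. The condition $x^* \in V_{P'}(A_4)$ becomes
$$\delta_1 + \delta_3 + \delta_4 \geq 0, \quad \delta_1 + \delta_2 + \delta_4 \geq 0, \quad \delta_1 + \delta_2 + \delta_3 \geq 0,$$
where $\delta_1$ appears in all three inequalities because the first interval is the common region in which $A_4$ disagrees with every other player.

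\textbf{Averaging.} Mirroring Section \ref{sec:bal_equil}, I introduce $f(\delta) = \prod_{i=1}^4 \binom{y_i}{\beta_i}$, the orbits $S_\alpha = \{\delta \in \mathbb{Z}^4 : \abs{\delta_i} = \alpha_i\}$, and the reward $r(\delta) \in \{0, 1/4, 1/3, 1/2, 1\}$ according to the number of inequalities that hold with equality. Since $f$ is constant on each $S_\alpha$, the inequality $\sigma_{P'}(A_4) \geq 1/4$ reduces to
$$\sum_{\delta \in S_\alpha} r(\delta) \geq \frac{\abs{S_\alpha}}{4} \text{ for every } \alpha \in \mathbb{Z}_{\geq 0}^4.$$
An analogue of Observation \ref{obs1:good} disposes of every $\alpha$ in which some $\alpha_k$ strictly dominates the two other variables that appear in an inequality, and a short case split by the number of inequalities that can equal zero on $S_\alpha$ (and then by which $\alpha_k$ is \emph{big} in each such inequality) settles the rest. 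With only four variables and three inequalities, this case analysis is considerably lighter than the calculation of Lemma \ref{lem1:main}.

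\textbf{Strictness and conclusion.} The above shows that every player's balancing move yields a score of at least $1/4$. If some $\sigma_P(A_i) < 1/4$, we already have a strict improvement and are done. Otherwise, since the four scores sum to $1$, every $\sigma_P(A_j)$ equals $1/4$, and I must strengthen the estimate to \emph{strict} for some player. This is precisely where the hypothesis $y_1^{(i)} \geq 1$ enters: taking $\alpha$ with $\alpha_1$ sufficiently large compared to each pair $\alpha_j + \alpha_k$ with $\{j,k\} \subset \{2,3,4\}$, the sign of $\delta_1$ alone decides all three inequalities, contributing $r = 1$ on half of $S_\alpha$ and $r = 0$ on the other half, a strict surplus over $\abs{S_\alpha}/4$; when such a dominating $\alpha_1$ is not achievable (for instance when $y_1 = 1$), a direct check on the orbit $\alpha = (1,1,1,1)$ (or a close variant respecting parities) again produces a strict surplus. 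To finish, I note that $P$ is balanced iff no coordinate has a $4$-$0$ or $3$-$1$ split, equivalently iff $y_1^{(i)} = 0$ for every $i$; hence any non-balanced $P$ admits some player $A_i$ with $y_1^{(i)} \geq 1$, whose balancing move gives score strictly above $1/4$, contradicting equilibrium. The main obstacle will be this strictness step: verifying that a suitable orbit $S_\alpha$ is always achievable (i.e.\ that the required $\beta_j = (\alpha_j + y_j)/2$ lie in $[0, y_j] \cap \mathbb{Z}$) for arbitrary parities of $y_2, y_3, y_4$, and that its surplus survives the summation over all orbits.
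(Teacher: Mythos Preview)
Your approach is essentially the paper's: same balancing move, same four-interval setup, same reduction to orbit inequalities, and the same two-step conclusion (weak inequality $\sum_{\delta\in S_\alpha}\rho(\delta)\ge |S_\alpha|/4$ for all orbits, then a strict orbit when $y_1>0$).

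One caveat worth flagging: the claimed analogue of Observation~\ref{obs1:good} does not transfer cleanly. That observation produces an \emph{upper} bound on the average reward, and its mechanism breaks in the reversed direction: here $\delta_1$ sits in all three inequalities, so no inequality is independent of a dominant $\delta_1$; and for a dominant $\delta_k$ with $k\ge 2$, even when the inequality it controls holds strictly, the other two inequalities can still both fail, so you cannot conclude the conditional average is $\ge 1/2$. The paper sidesteps this entirely and proves the weak inequality (Lemma~\ref{lem2:main}) by a short direct case split on how many of the $\alpha_i$ vanish and on which $\alpha_i$ is largest --- no Observation~\ref{obs1:good} analogue is used or needed. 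For the strictness step the paper does precisely what you anticipate but leave unwritten: it exhibits one explicit orbit for each of the eight parity patterns of $(y_1,y_2,y_3,y_4)$, taking $\alpha_1\in\{1,2\}$ according to the parity of $y_1>0$ and $\alpha_j\in\{0,1\}$ for $j\ge 2$.
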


We first outline our strategy. Let $P$ be an arbitrary profile on $Q_n$. We can assume, without loss of generality, that, in every coordinate, at least two of $A_1, A_2$, and $A_3$ have chosen 0. The players' positions can then be written as $x^{(1)} = 0_{y_1}0_{y_2}0_{y_3}1_{y_4}$, $x^{(2)} = 0_{y_1}0_{y_2}1_{y_3}0_{y_4}$, $x^{(3)} = 0_{y_1}1_{y_2}0_{y_3}0_{y_4}$, and $x^{(4)} = 1_{y_{1,1}}0_{y_{1,2}}1_{y_{2,1}}0_{y_{2,2}}1_{y_{3,1}}0_{y_{3,2}}1_{y_{4,1}}0_{y_{4,2}}$, where the non-negative integers $y_i$ and $y_{i,j}$ satisfy $y_1 + \dots + y_4 = n$, and $y_{i,1} + y_{i,2} = y_i$, for all $i$.

Our goal is to show that, if $P$ is not balanced, then at least one of the four players can increase their score by moving to the position that will make the profile `as close to balanced as possible'. More precisely, we say that a player performs a \emph{balancing move} if, for every $j$, that player chooses $x_j=0$ if and only if at least two of the other three players have chosen $x_j=1$. Otherwise, the player chooses $x_j=1$. Note that this will make the position balanced if and only if there is no coordinate in which the other three players all agree.

Let $P' = (x^{(1)}, x^{(2)}, x^{(3)}, 1_n)$, which is the profile that occurs when we start from $P$ and $A_4$ performs his balancing move. We will prove that $\sigma_{P'}(A_4) \geq 1/4$, with equality holding if and only if $P'$ is balanced. We will follow the same logic as in Section \ref{sec:bal_equil}.

Let $x^* \in Q_n$ be an arbitrary point, and let $\beta_1$ denote the number of 1s in the first $y_1$ coordinates of $x^*$, $\beta_2$ the number of 1s in the next $y_2$ coordinates, $\beta_3$ the number of 1s in the next $y_3$ coordinates, and $\beta_4$ the number of 1s in the last $y_4$ coordinates of $x^*$. In other words, $\beta_i$ shows the number of coordinates in which $x^*$ agrees with $x^{(4)'} = 1_n$ in the interval corresponding to $y_i$. We will first write down the set of equations that must be satisfied in order for $x^*$ to belong to $V_{P'}(A_4)$. We must have $d_{P'}(x^*,A_4) \leq d_{P'}(x^*,A_i)$, for $i=1, 2, 3$. Once again, we set $\delta_i = 2\beta_i - y_i$, which gives the following set of equations:

\begin{equation}
\label{eq2:general_simple}
    \begin{aligned}
    \delta_1+\delta_2+\delta_3 &\geq 0, \\  
    \delta_1+\delta_2+\delta_4 &\geq 0, \\
    \delta_1+\delta_3+\delta_4 &\geq 0.
    \end{aligned}
\end{equation}

Let $\delta = (\delta_1, \dots, \delta_4)$. We say that a point $x^*$, as defined above, is a \emph{point of type $\delta$}. The number of points of type $\delta$ is equal to
\begin{equation*}
    \prod_{i=1}^4 \binom{y_i}{\beta_i} = \prod_{i=1}^6 \binom{y_i}{\frac{\delta_i + y_i}{2}} \coloneqq g(\delta).
\end{equation*}
We observe that $g$ is symmetric in each $\delta_i$. Therefore, if $\delta$ and $\delta'$ are chosen such that $\abs{\delta_i} = \abs{\delta_i'}$, for every $i$, then $g(\delta) = g(\delta')$.

For any $\delta$, we define the \emph{reward} $\rho(\delta)$ of $A_4$ from points of type $\delta$ as follows: If $\delta$ does not satisfy (\ref{eq2:general_simple}), then $r(\delta) = 0$. If $\delta$ satisfies (\ref{eq2:general_simple}), then 
$$\rho(\delta) = 
\begin{cases}
1, & \text{if strict inequality holds in all three equations in (\ref{eq2:general_simple})}, \\
1/2, & \text{if equality holds in exactly one equation in (\ref{eq2:general_simple})}, \\
1/3, & \text{if equality holds in exactly two equations in (\ref{eq2:general_simple})},\\
1/4, & \text{if equality holds in all three equations in (\ref{eq2:general_simple})}.
\end{cases}$$

For $i=1, \dots, 4$, let $\alpha_i$ be a non-negative integer, and let $\alpha = (\alpha_1, \dots, \alpha_4)$. Let $$S_\alpha \coloneqq \{\delta \in \mathbb{Z}^4 : \abs{\delta_i} = \alpha_i \ \text{for} \ i=1,\dots, 4\}.$$
It suffices to show that, for every $\alpha$, the average reward of $A_4$ from points of type $\delta$, where $\delta$ takes values in $S_\alpha$, is at least $1/4$.
Since $g$ is constant on $S_\alpha$, it is sufficient to prove the following:

\begin{lemma}
\label{lem2:main}
    For any non-negative integers $\alpha_1, \dots, \alpha_4$,
\begin{equation}
    \label{eq2:alpha_good}
        \sum_{\delta \in S_\alpha} \rho(\delta) \geq \frac{\abs{S_\alpha}}{4}.
\end{equation}
\end{lemma}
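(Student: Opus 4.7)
The plan is to mirror the structure of the proof of Theorem \ref{thm1:main}, adapted to the system (\ref{eq2:general_simple}). Two simplifications should help: there are only four variables $\delta_1, \dots, \delta_4$ (rather than six), so $|S_\alpha| \leq 16$; and (\ref{eq2:general_simple}) enjoys a full $S_3$ symmetry on the indices $\{2, 3, 4\}$ (each of $\delta_2, \delta_3, \delta_4$ appears in exactly two of the three inequalities), while $\delta_1$ plays a distinguished role by appearing in all three. The bound here is sometimes tight: when $\alpha_1 = 0$ (corresponding to a balanced post-move profile $P'$, that is, $y_1 = 0$) and $\alpha_2 = \alpha_3 = \alpha_4$, a short direct calculation gives $\sum_\delta \rho(\delta) = |S_\alpha|/4$ exactly.

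First I would establish an analog of Observation \ref{obs1:good} for the lower bound direction: if there exist distinct indices $l, i, j$ with $\delta_l, \delta_i, \delta_j$ appearing in a common equation of (\ref{eq2:general_simple}) and $\alpha_l > \alpha_i + \alpha_j$, then on the half of $S_\alpha$ where $\delta_l = \alpha_l$ that equation is strict, and on the other half it fails so $\rho = 0$. A pairing argument on the good half (exploiting that, with $\delta_l$ fixed to $\alpha_l$, the remaining two equations are still constrained by a term of size $\alpha_l$) should yield $\sum_\delta \rho(\delta) \geq |S_\alpha|/4$ for all such $\alpha$. This would let us restrict subsequent attention to those $\alpha$ in which no $\alpha_l$ is strictly big in any equation.

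Next I would dispose of the boundary cases in which at least one $\alpha_i$ vanishes. The case $\alpha = 0$ is immediate: $\rho(0) = 1/4 = |S_\alpha|/4$. When exactly $\alpha_1 = 0$, the system collapses to three pairwise constraints on $\delta_2, \delta_3, \delta_4$, and direct enumeration of the eight sign patterns verifies the bound (with equality when $\alpha_2 = \alpha_3 = \alpha_4$). Remaining degenerations (with two or more $\alpha_i$'s vanishing) are shorter still.

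The main case then has all $\alpha_i > 0$, so $|S_\alpha| = 16$ and the target is $\sum_\delta \rho(\delta) \geq 4$. Stratifying by the number of equations in which equality can be attained (zero, one, two, or three), and using the $S_3$ symmetry on $\{2,3,4\}$ to eliminate equivalent subcases, one enumerates $\rho(\delta)$ over the sixteen sign patterns $(s_1, s_2, s_3, s_4) \in \{+,-\}^4$ within each subcase, in the same manner as Section \ref{sec:bal_equil}. The main obstacle is simply the volume of bookkeeping; no new conceptual issue arises beyond what was handled in the proof of Theorem \ref{thm1:main}. Particular care is needed at the boundary subcases where equality is attained, since---unlike the strict upper bound of Lemma \ref{lem1:main}---the lower bound here is tight in several configurations, matching the expectation that $A_4$'s score equals $1/4$ precisely when the post-move profile $P'$ is balanced.
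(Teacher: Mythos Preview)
Your proposal would work, but it takes a much longer route than the paper. The paper exploits an asymmetry between the two directions that you overlook: for an \emph{upper} bound (Lemma~\ref{lem1:main}) one really does need to account for every $\delta\in S_\alpha$, but for a \emph{lower} bound it suffices to exhibit a handful of specific $\delta$'s whose rewards already sum to at least $|S_\alpha|/4$. The paper does exactly this. In the main case $\alpha_i>0$ for all $i$, it orders $\alpha_i\geq\alpha_j\geq\alpha_k\geq\alpha_l$, observes that $\rho(\delta)=1$ whenever $I_\delta^-\in\{\emptyset,\{l\},\{k\}\}$ (three terms), and then finds one more $\delta$ with $\rho(\delta)=1$ (or two with $\rho=1/2$) by a short case split on whether the largest index $i$ equals $1$. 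The whole argument fits in a paragraph, with no stratification by number of attainable equalities and no sixteen-pattern tables. The boundary cases with some $\alpha_i=0$ are handled equally directly.

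Your proposed analogue of Observation~\ref{obs1:good} is also shakier than you suggest. You write that with $\delta_l=\alpha_l$ fixed, ``the remaining two equations are still constrained by a term of size $\alpha_l$''; but if $l\in\{2,3,4\}$ then one of the three inequalities in~(\ref{eq2:general_simple}) does not contain $\delta_l$ at all, so no such constraint is present. The claimed pairing argument therefore needs more care than your sketch provides. This does not sink your overall plan, since the fallback enumeration would still go through, but it means the observation buys you less case reduction than you expect---and in any event the paper's direct approach makes the observation unnecessary.
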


\begin{proof}
     We say that $\alpha$ is \emph{good} if it satisfies (\ref{eq2:alpha_good}). For any $\delta \in S_\alpha$, let $I_\delta^+ = \{i \in \{1, \dots, 4\}: \delta_i > 0\}$, and $I_\delta^- = \{i \in \{1, \dots, 4\}: \delta_i < 0\}$. 
     
     \textbf{Case 1: $\boldsymbol{\alpha_i > 0}$, for all $\boldsymbol{i}$.}\par\nopagebreak
     Suppose that $\alpha_i \geq \alpha_j \geq \alpha_k \geq \alpha_l>0$, where $i,j,k,l$ are distinct, and equal to $1,2,3,4$ in some order. If $I_\delta^- = \emptyset, \{l\}$, or $\{k\}$, then $\rho(\delta) = 1$. If $i = 1$, then $\rho(\alpha_1, -\alpha_2, \alpha_3, \alpha_4) = 1$, so $\alpha$ is good. Otherwise, we can take, without loss of generality, $i=2$. Suppose first that $j=1$. If $\alpha_1 < \alpha_3 + \alpha_4$, then $\rho(-\alpha_1, \alpha_2, \alpha_3, \alpha_4) = 1$, if $\alpha_1 > \alpha_3 + \alpha_4$, then $\rho(\alpha_1, \alpha_2, -\alpha_3, -\alpha_4) = 1$, and if $\alpha_1 = \alpha_3 + \alpha_4$, then $\rho(\alpha_1, \alpha_2, -\alpha_3, -\alpha_4) =\rho(-\alpha_1, \alpha_2, \alpha_3, \alpha_4) = 1/2$. In all cases, $\alpha$ is good. Suppose now that $j=3$. If $\alpha_3 < \alpha_1 + \alpha_4$, then $\rho(\alpha_1,\alpha_2,-\alpha_3,\alpha_4) = 1$, and if $\alpha_3 > \alpha_1 + \alpha_4$, then $\rho(-\alpha_1,\alpha_2,\alpha_3,-\alpha_4) = 1$. If $\alpha_3 = \alpha_1 + \alpha_4$, then, if $\alpha_2 > \alpha_3$, then $\rho(-\alpha_1,\alpha_2,\alpha_3,-\alpha_4) = \rho(\alpha_1,\alpha_2,-\alpha_3,\alpha_4) = 1/2$, and if $\alpha_2 = \alpha_3$, then $\rho(\alpha_1,-\alpha_2,\alpha_3,\alpha_4) = \rho(\alpha_1,\alpha_2,-\alpha_3,\alpha_4) = 1/2$. In all cases, $\alpha$ is good.

     \vspace{\baselineskip}
     
     \textbf{Case 2: Exactly one $\boldsymbol{\alpha_i}$ is equal to $\boldsymbol{0}$.}\par\nopagebreak
     Suppose first that $\alpha_1 = 0$. If $\alpha_4 < \text{min}\{\alpha_2, \alpha_3\}$, then $\rho(\delta) = 1$ when $I_\delta^- = \emptyset, \{4\}$. If $\alpha_3 = \alpha_4 < \alpha_2$, then $\rho(\delta) = 1$ when $I_\delta^- = \emptyset$, and $\rho(\delta) = 1/2$ when $I_\delta^- = \{3\}, \{4\}$. If $\alpha_2 = \alpha_3 = \alpha_4$, then $\rho(\delta) = 1$ when $I_\delta^- = \emptyset$, and $\rho(\delta) = 1/3$ when $I_\delta^- = \{2\}, \{3\}, \{4\}$. In all cases, $\alpha$ is good. Suppose now that $\alpha_2 = 0$. If $\alpha_k < \text{min}\{\alpha_i, \alpha_j\}$, where $i,j,k$ is some permutation of $1,3,4$, then $\rho(\delta) = 1$ when $I_\delta^- = \emptyset, \{k\}$. If $\alpha_3 = \alpha_4 \leq \alpha_1$, then $\rho(\delta) = 1$ when $I_\delta^- = \emptyset$, and $\rho(\delta) \geq 1/2$ when $I_\delta^- = \{3\}, \{4\}$. If $\alpha_1 = \alpha_3 < \alpha_4$, then $\rho(\delta) = 1$ when $I_\delta^- = \emptyset$, and $\rho(\delta) = 1/2$ when $I_\delta^- = \{1\}, \{3\}$. Once again, $\alpha$ is good in all cases.

     \vspace{\baselineskip}
     
     \textbf{Case 3: At least two of the $\boldsymbol{\alpha_i}$ are equal to $\boldsymbol{0}$.}\par\nopagebreak
     If exactly two of them are $0$, then taking the other two of them to be positive gives a reward of $1$, and shows that $\alpha$ is good. If exactly three of the $\alpha_i$ are $0$, then taking the other one to be positive gives a reward of at least $1/2$, showing that $\alpha$ is good. Finally, if $\alpha_i = 0$ for all $i$, then $\rho(0,0,0,0) = 1/4$, and $\alpha$ is good.
\end{proof}

\begin{proof}[Proof of Theorem \ref{thm2:main}.]
Let $P$ be an arbitrary non-balanced profile. If any player's score is less than $1/4$, then, by Lemma \ref{lem2:main}, that player can increase their score by performing their balancing move. Suppose now that $\sigma_P(A_i) = 1/4$, for all $i$. Since $P$ is non-balanced, we can assume that there exists some coordinate in which at least three players agree. Without loss of generality, we can assume that $A_1, A_2$ and $A_3$ have all chosen $0$ in the first coordinate. The players' positions can then be written as $x^{(1)} = 0_{y_1}0_{y_2}0_{y_3}1_{y_4}$, $x^{(2)} = 0_{y_1}0_{y_2}1_{y_3}0_{y_4}$, $x^{(3)} = 0_{y_1}1_{y_2}0_{y_3}0_{y_4}$ and $x^{(4)} = 1_{y_{1,1}}0_{y_{1,2}}1_{y_{2,1}}0_{y_{2,2}}1_{y_{3,1}}0_{y_{3,2}}1_{y_{4,1}}0_{y_{4,2}}$, where $y_1 + \dots + y_4 = n$, $y_{i,1} + y_{i,2} = y_i$ for all $i$, and $y_1 > 0$. 

Let $P' = (x^{(1)}, x^{(2)}, x^{(3)}, 1_n)$ be the profile that occurs once $A_4$ performs his balancing move. We wish to prove that $\sigma_{P'}(A_4) > 1/4$. It suffices to show that there always exists at least one value of $\alpha$ for which strict inequality holds in Lemma \ref{lem2:main}. Recall that $\alpha_i$ has the same parity as $y_i$, for all $i$, so we check a few cases depending on the parity of the $y_i$. Suppose first that $y_1$ is odd. If $y_2, y_3, y_4$ are all even, then
\begin{equation*}
        \sum_{\delta \in S_{(1,0,0,0)}} \rho(\delta) = 1 > \frac{\abs{S_{(1,0,0,0)}}}{4} = \frac{1}{2}.
\end{equation*}
If $y_2$ is odd, and $y_3, y_4$ are even, then
\begin{equation*}
        \sum_{\delta \in S_{(1,1,0,0)}} \rho(\delta) = \frac{4}{3} > \frac{\abs{S_{(1,1,0,0)}}}{4} = 1.
\end{equation*}
If $y_2, y_3$ are odd, and $y_4$ is even, then
\begin{equation*}
        \sum_{\delta \in S_{(1,1,1,0)}} \rho(\delta) = \frac{7}{3} > \frac{\abs{S_{(1,1,1,0)}}}{4} = 2.
\end{equation*}
If $y_2, y_3, y_4$ are all odd, then
\begin{equation*}
        \sum_{\delta \in S_{(1,1,1,1)}} \rho(\delta) = 5 > \frac{\abs{S_{(1,1,1,1)}}}{4} = 4.
\end{equation*}

Suppose now that $y_1$ is even. Since $y_1 > 0$, we can always take $\alpha_1 = 2$. If $y_2, y_3, y_4$ are all even, then
\begin{equation*}
        \sum_{\delta \in S_{(2,0,0,0)}} \rho(\delta) = 1 > \frac{\abs{S_{(2,0,0,0)}}}{4} = \frac{1}{2}.
\end{equation*}
If $y_2$ is odd, and $y_3, y_4$ are even, then
\begin{equation*}
        \sum_{\delta \in S_{(2,1,0,0)}} \rho(\delta) = 2 > \frac{\abs{S_{(2,1,0,0)}}}{4} = 1.
\end{equation*}
If $y_2, y_3$ are odd, and $y_4$ is even, then
\begin{equation*}
        \sum_{\delta \in S_{(2,1,1,0)}} \rho(\delta) = \frac{7}{2} > \frac{\abs{S_{(2,1,1,0)}}}{4} = 2.
\end{equation*}
If $y_2, y_3, y_4$ are all odd, then
\begin{equation*}
        \sum_{\delta \in S_{(2,1,1,1)}} \rho(\delta) = 6 > \frac{\abs{S_{(2,1,1,1)}}}{4} = 4.
\end{equation*}
\end{proof}

\begin{proof}[Proof of Theorem \ref{thm:hyp_main}]
   Of course, Theorem \ref{thm:hyp_main} follows immediately from Theorems \ref{thm1:main} and \ref{thm2:main}.
\end{proof}

\section{Conclusion}
\label{sec:conclusion}

Having established Theorem \ref{thm:hyp_main}, we now understand equilibria in this Voronoi game with at most four players. The main open problem is to either prove or disprove Conjecture \ref{conj:k>=5}, i.e., to either prove that, for any fixed $k \geq 5$, no equilibria exist for large enough $n$, or to show that, for some value(s) of $k \geq 5$, equilibria exist for arbitrarily large $n$. Of course, if the latter is true, it would still be interesting to completely characterize the values of $k$ for which it happens. In general, answering the question for any specific $k \geq 5$ would be significant.

It is also worth mentioning that the same Voronoi game can be studied using a non-uniform measure on the vertices of $Q_n$. Once again, this is interesting from the voting theory perspective, as one would not expect the voters to be uniformly distributed among the $2^n$ possible positions, in a real-world scenario. Some two-player results in this direction have been established by Day and Johnson \cite{two-player}.

\section*{Acknowledgments}
The author would like to thank David Ellis for useful discussions, and Nick Day and Robert Johnson for sharing with us their manuscript \cite{unpublished}.

\bibliographystyle{plain}
\bibliography{references.bib}

\begin{thebibliography}{1}

\bibitem{two-player}
A.~N. Day and J.~R. Johnson.
\newblock Equilibria in a hypercube spatial voting model.
\newblock \url{https://arxiv.org/abs/2406.18466}, 2024.

\bibitem{unpublished}
A.~N. Day and J.~R. Johnson.
\newblock Equilibria in multi-player {V}oronoi games on the hypercube.
\newblock Unpublished manuscript, 2025.

\bibitem{enelow1984spatial}
J.M. Enelow and M.J. Hinich.
\newblock {\em The Spatial Theory of Voting: An Introduction}.
\newblock Cambridge University Press, Cambridge, 1984.

\bibitem{three_player}
R.~Feldmann, M.~Mavronicolas, and B.~Monien.
\newblock {N}ash equilibria for {V}oronoi games on transitive graphs.
\newblock In {\em Internet and Network Economics}, pages 280--291. Springer,
  Berlin and Heidelberg, 2009.

\bibitem{Voronoi:cycle}
M.~Mavronicolas, B.~Monien, V.~G. Papadopoulou, and F.~Schoppmann.
\newblock Voronoi games on cycle graphs.
\newblock In {\em Mathematical Foundations of Computer Science 2008}, pages
  503--514. Springer, Berlin and Heidelberg, 2008.

\bibitem{Unified}
S.~Merrill, III and B.~Grofman.
\newblock {\em A Unified Theory of Voting: Directional and Proximity Spatial
  Models}.
\newblock Cambridge University Press, Cambridge, 1999.

\bibitem{Voronoi:tree}
S.~Teramoto, E.~Demaine, and R.~Uehara.
\newblock The {V}oronoi game on graphs and its complexity.
\newblock {\em Journal of Graph Algorithms and Applications}, 15:485--501,
  2011.

\bibitem{Voronoi1908}
G.~Voronoi.
\newblock Nouvelles applications des paramètres continus à la théorie des
  formes quadratiques. {P}remier mémoire. {S}ur quelques propriétés des
  formes quadratiques positives parfaites.
\newblock {\em Journal für die reine und angewandte Mathematik (Crelle's
  Journal)}, 133:97--178, 1908.

\end{thebibliography}

\end{document}